\newtheorem{thm}{Theorem}
\newtheorem*{thm*}{Theorem}
\newtheorem{lem}[thm]{Lemma}
\newtheorem{prop}[thm]{Proposition}
\theoremstyle{definition}
\newtheorem{defn}[thm]{Definition}
\newtheorem{ass}{Assumption}
\theoremstyle{remark}
\newtheorem{rem}[thm]{Remark}
\newcommand{\fr}{\penalty-20\null\hfill$\blacksquare$}
\newcommand{\heis}{{\mathbb{H}}} 	
\newcommand{\defeq}{\mathrel{\mathop:}=}
\def\Xint#1{\mathchoice
	{\XXint\displaystyle\textstyle{#1}}%
	{\XXint\textstyle\scriptstyle{#1}}%
	{\XXint\scriptstyle\scriptscriptstyle{#1}}%
	{\XXint\scriptscriptstyle\scriptscriptstyle{#1}}%
	\!\int}
\def\XXint#1#2#3{{\setbox0=\hbox{$#1{#2#3}{\int}$}
		\vcenter{\hbox{$#2#3$}}\kern-.5\wd0}}
\def\dashint{\Xint-}
\newcommand{\RCD}{{\mathrm {RCD}}}
\newcommand{\PI}{{\mathrm {PI}}}
\newcommand{\XX}{{\mathsf{X}}}
\newcommand{\YY}{{\mathsf{Y}}}
\newcommand{\ZZ}{{\mathsf{Z}}}
\newcommand{\dist}{{\mathsf{d}}}
\renewcommand{\dd}{\dist}
\newcommand{\mass}{{\mathsf{m}}}
\newcommand{\Ch}{{\mathrm{Ch}}}
\newcommand{\LIP}{{\mathrm {LIP}}}
\newcommand{\lip}{{\mathrm {lip}}}
\newcommand{\DIFF}{{\mathrm{D}}}
\newcommand{\RR}{\mathbb{R}}
\newcommand{\NN}{\mathbb{N}}
\newcommand{\LL}{\mathcal{L}}
\newcommand{\HH}{\mathcal{H}}
\let\oldchi=\chi
\renewcommand{\chi}{\text{\raisebox{\depth}{\(\oldchi\)}}}
\let\phi\varphi
\let\epsilon\varepsilon
\title{Nguyen's approach to Sobolev spaces\\ in metric measure spaces \\with unique tangents}
\author{Camillo Brena\footnote{\href{mailto:camillo.brena@sns.it}{camillo.brena@sns.it}, Scuola Normale Superiore, Piazza dei Cavalieri, 7, 56126 Pisa, Italy.}
\and Andrea Pinamonti\footnote{\href{mailto:andrea.pinamonti@unitn.it}{andrea.pinamonti@unitn.it}, Dipartimento di Matematica, Universit\`{a} degli Studi di Trento, Via Sommarive 14, 38123 Povo (Trento), Italy.}}
\begin{document}
	%\tableofcontents
	
 \maketitle
\begin{abstract}
    We extend Nguyen's characterization of Sobolev spaces $W^{1,p}$ \cite{Ngu06,Ngu08} to the setting of PI-metric measure spaces such that at $\mass$-a.e.\ point the tangent space (in the Gromov--Hausdorff sense) is unique and Euclidean with a fixed dimension. 
    We also generalize \cite{CLL14} to PI-metric measure spaces such that at $\mass$-a.e.\ point the tangent space is unique and equal to the Heisenberg group with a fixed homogeneous dimension. 
    The approach is easier and  completely different from the one in \cite{Ngu06,Ngu08}.
\end{abstract}
 \section{Introduction}

 %{\color{blue} Tra gli spazi buoni: $\RCD(0,N)$,  Ahlfors $\RCD(K,N)$ e $\RCD(K,N)$ con misura finita}
In the early 2000’s the study of fractional $s$-seminorms gained great interest when Bourgain, Brezis and Mironescu \cite{Bourg} on one hand, and Maz’ya and Shaposhnikova \cite{MazS} on the other, showed that they can be seen as functionals interpolating between the $L^p(\RR^n)$-norm and the $W^{1, p}(\RR^n)$-seminorm. More precisely, the following  important asymptotic formulas have been proved in \cite{Bourg} and \cite{MazS} respectively: for any $p\in (1,\infty)$, $s\in (0, 1)$, $n\in \NN$ and $f\in \bigcup_{0<s<1} W^{s, p}(\RR^n) \cap W^{1, p}(\RR^n)$ with bounded support, it holds
 \begin{equation}\label{BBM:intro}
 \mathop{\lim}_{s \nearrow 1}~ (1-s)\|f\|^p_{W^{s, p}(\RR^n)}= {K }  \| \DIFF f\|_{L^p(\RR^n)}^p  \tag{BBM}
\end{equation}
and
 \begin{equation}\label{MS:intro}
 \mathop{\lim}_{s \searrow 0} ~s\|f\|^p_{W^{s, p}(\RR^n)}= {L }  \|  f\|_{L^p(\RR^n)}^p, \tag{MS}
\end{equation}
where $K=K(p,n)>0$, $L=L(p,n)>0$ and $W^{s, p}(\RR^n)$ is defined as the set of $L^p(\RR^n)$ functions with finite seminorm
\[
\|f\|_{W^{s, p}(\RR^n)}\defeq\left (\int_{\RR^n} \int_{\RR^n} \frac{|f(x)-f(y)|^p} {|x-y|^{n+sp}}\dd x \dd y \right)^{\frac 1p}.
\]
Later on, several different generalizations of these results in Euclidean spaces were considered: see e.g.\ Ponce \cite{Ponce},
Leoni--Spector \cite{LS1,LS2}, Brezis--Nguyen \cite{BNgu,BNgu2,BNgu3,BNgu4}, Pinamonti--Vecchi--Squassina \cite{PVS1},
Nguyen--Pinamonti--Vecchi--Squassina \cite{HPV}, Brezis--Van Schaftingen--Yung \cite{BVY,BVY2,BVY3}, Maalaoui--Pinamonti \cite{MalPin},
Garofalo--Tralli \cite{GarTral2},  Alonso Ruiz--Baudoin--Chen--Rogers--Shanmugalingam--Teplyaev \cite{Baud1}, Alonso Ruiz--Baudoin \cite{Baud}.\\

Motivated by the previous results in \cite{Ngu06,Ngu08,Ngu11,Ngu11A}, H.M.\ Nguyen %\footnote{CB: talvolta mettiamo barra, prima dei differenziali nell'integrale, talvolta no. Ma sistemerei come ultimissima cosa.} 
provided a new characterization of Sobolev spaces using a completely different family of nonlocal functionals, see also \cite{BouNgu}. Namely,
\begin{thm}\label{thmlimitsEuclideo}
		Let $p\in(1,\infty)$ and $f\in W^{1,p}(\RR^n)$.  Then
		\begin{equation}\label{mainthmeqeuclideo}
			\lim_{\delta\searrow 0} \iint_{|f(x)-f(y)|>\delta}\frac{\delta^p}{|x-y|^{n+p}}\dd y\dd x=\frac{C_{n,p}} {p}\int_{\RR^n}|\DIFF f|^p(x)\dd x
		\end{equation}
	and 
		\begin{equation}\label{mainthmeqeuclideo1}
		\lim_{\delta\searrow 0}\iint_{|f(x)-f(y)|\le 1}\frac{\delta|f(x)-f(y)|^{p+\delta}}{|x-y|^{n+p}}\dd y\dd x=C_{n,p}\int_{\RR^n}|\DIFF f|^p(x)\dd x,
	\end{equation}
		where $$C_{n,p}\defeq \int_{v\in S^{n-1}}|e\,\cdot\,v|^p\dd\HH^{n-1}(v)\qquad\text{for any }e\in S^{n-1}.$$
\end{thm}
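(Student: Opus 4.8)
The plan is to prove \eqref{mainthmeqeuclideo} first for $f\in C^1_c(\RR^n)$ by an explicit computation, then to pass to general $f\in W^{1,p}(\RR^n)$ by density (the step that really needs $p>1$), and finally to deduce \eqref{mainthmeqeuclideo1} from \eqref{mainthmeqeuclideo} by a Fubini manipulation combined with an elementary Abelian argument. Write $I_g(\delta)\defeq\iint_{|g(x)-g(y)|>\delta}\delta^p|x-y|^{-n-p}\dd y\dd x$, so that \eqref{mainthmeqeuclideo} reads $I_f(\delta)\to\frac{C_{n,p}}{p}\int_{\RR^n}|\DIFF f|^p$ as $\delta\searrow0$.

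\emph{The smooth case.} Let $f\in C^1_c(\RR^n)$ with $\mathrm{supp}\,f\subset\overline{B_M}$ and $L\defeq\mathrm{Lip}(f)<\infty$. The contribution to $I_f(\delta)$ of the region $\{|x-y|>1\}$ is $O(\delta^p)$ — on that region $|f(x)-f(y)|>\delta$ forces $x$ or $y$ to lie in $\overline{B_M}$, so it is at most $2\,\delta^p\,|B_M|\int_{|z|>1}|z|^{-n-p}\dd z$ — and hence vanishes in the limit. On $\{|x-y|\le1\}$ one passes to polar coordinates $y=x+r\omega$ and is left with
\[
\int_{\RR^n}\int_{S^{n-1}}\int_0^1\mathbf 1_{\{|f(x+r\omega)-f(x)|>\delta\}}\,\frac{\delta^p}{r^{1+p}}\,\dd r\,\dd\HH^{n-1}(\omega)\,\dd x.
\]
Rescaling $r=\delta s$, the innermost integral becomes $\int_0^{1/\delta}s^{-1-p}\mathbf 1_{\{|f(x+\delta s\omega)-f(x)|/\delta>1\}}\dd s$; since $f\in C^1$ its integrand converges pointwise in $s$ to $s^{-1-p}\mathbf 1_{\{s\,|\DIFF f(x)\cdot\omega|>1\}}$, and it is dominated by $s^{-1-p}\mathbf 1_{\{s>1/L\}}\in L^1(0,\infty)$ thanks to $|f(x+\delta s\omega)-f(x)|\le L\delta s$, so dominated convergence gives the limit $\frac1p|\DIFF f(x)\cdot\omega|^p$. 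Integrating in $\omega$ and using the rotational invariance of $\HH^{n-1}\mres S^{n-1}$ — that is, $\int_{S^{n-1}}|v\cdot\omega|^p\dd\HH^{n-1}(\omega)=C_{n,p}|v|^p$, which is exactly the content of the definition of $C_{n,p}$ — the $\omega$-integral tends to $\frac{C_{n,p}}{p}|\DIFF f(x)|^p$; since the $x$-integrand is supported in $\overline{B_{M+1}}$ and bounded by $\frac{L^p}{p}\HH^{n-1}(S^{n-1})$, one more dominated convergence yields $I_f(\delta)\to\frac{C_{n,p}}{p}\int_{\RR^n}|\DIFF f|^p$.

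\emph{Density.} The crucial ingredient is a $\delta$-uniform a priori bound: there is $C=C(n,p)$ with $\sup_{\lambda>0}I_g(\lambda)\le C\,\|\DIFF g\|_{L^p(\RR^n)}^p$ for all $g\in W^{1,p}(\RR^n)$. This comes from the standard pointwise estimate $|g(x)-g(y)|\le C_n|x-y|\big(M g^{*}(x)+M g^{*}(y)\big)$, valid for a.e.\ $x,y$, where $g^{*}\defeq|\DIFF g|$ and $M$ is the Hardy--Littlewood maximal operator: the event $|g(x)-g(y)|>\lambda$ then forces $\max\{Mg^{*}(x),Mg^{*}(y)\}>\lambda/(2C_n|x-y|)$, and running the polar-coordinate computation of the smooth case with this condition bounds $I_g(\lambda)$ by $C(n,p)\,\|Mg^{*}\|_{L^p}^p\le C(n,p)\,\|\DIFF g\|_{L^p}^p$ via the strong $(p,p)$ maximal inequality — and here is where $p>1$ is used. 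Now given $f\in W^{1,p}(\RR^n)$ take $f_k\in C^1_c(\RR^n)$ with $f_k\to f$ in $W^{1,p}$ and, for $\epsilon\in(0,1)$, use
\[
\mathbf 1_{\{|f(x)-f(y)|>\delta\}}\le\mathbf 1_{\{|f_k(x)-f_k(y)|>(1-\epsilon)\delta\}}+\mathbf 1_{\{|(f-f_k)(x)-(f-f_k)(y)|>\epsilon\delta\}},
\]
which after multiplying by $\delta^p|x-y|^{-n-p}$ and integrating gives $I_f(\delta)\le(1-\epsilon)^{-p}I_{f_k}((1-\epsilon)\delta)+\epsilon^{-p}I_{f-f_k}(\epsilon\delta)$. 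Taking $\limsup_{\delta\searrow0}$ (using the smooth case for $f_k$ and the a priori bound for $f-f_k$), then $k\to\infty$, then $\epsilon\searrow0$, yields $\limsup_{\delta\searrow0}I_f(\delta)\le\frac{C_{n,p}}{p}\int|\DIFF f|^p$; exchanging the roles of $f$ and $f_k$ in the same inequality and arguing in the same way gives the matching lower bound, hence \eqref{mainthmeqeuclideo}.

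\emph{The second formula, and the main obstacle.} With $u=|f(x)-f(y)|$ one has the identity $\delta\,u^{p+\delta}\mathbf 1_{\{u\le1\}}=\delta(p+\delta)\int_0^1\lambda^{p+\delta-1}\mathbf 1_{\{u>\lambda\}}\dd\lambda-\delta\,\mathbf 1_{\{u>1\}}$; integrating it against $|x-y|^{-n-p}\dd y\dd x$ and using Tonelli, the left side of \eqref{mainthmeqeuclideo1} becomes $\delta(p+\delta)\int_0^1\lambda^{\delta-1}I_f(\lambda)\dd\lambda-\delta\,I_f(1)$. The last term is $O(\delta)$ by the a priori bound, while $I_f(\lambda)\to A\defeq\frac{C_{n,p}}{p}\int|\DIFF f|^p$ by \eqref{mainthmeqeuclideo} and $\int_0^1\lambda^{\delta-1}\dd\lambda=1/\delta$; splitting the $\lambda$-integral at a small threshold and controlling the remaining compact range by the a priori bound, a routine Abelian argument gives $\delta(p+\delta)\int_0^1\lambda^{\delta-1}I_f(\lambda)\dd\lambda\to pA$, i.e.\ \eqref{mainthmeqeuclideo1}. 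I expect the density step to be the genuine difficulty: $I_\cdot(\delta)$ is not upper semicontinuous and the crude splitting of the indicator loses a factor $2^p$; splitting instead at the levels $(1-\epsilon)\delta$ and $\epsilon\delta$ and then letting $\epsilon\searrow0$ removes this loss, but the argument only closes because of the uniform bound $\sup_\lambda I_g(\lambda)\le C(n,p)\|\DIFF g\|_{L^p}^p$, whose proof genuinely requires $p>1$.
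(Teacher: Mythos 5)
Your argument is correct, and at the structural level it follows the same scheme the paper uses to prove its metric Theorem~\ref{thmlimits}, of which the present statement is the Euclidean special case (the paper does not reprove Theorem~\ref{thmlimitsEuclideo} itself; it quotes Nguyen, whose original argument is quite different). Both proofs consist of: (i) the exact limit for regular functions; (ii) a density step based on the splitting $\chi_{\{|f(x)-f(y)|>\delta\}}\le\chi_{\{|f_k(x)-f_k(y)|>(1-\epsilon)\delta\}}+\chi_{\{|(f-f_k)(x)-(f-f_k)(y)|>\epsilon\delta\}}$ (the paper's Lemma~\ref{tricklem}) combined with a $\delta$-uniform bound of $I_\delta(g)$ by the $p$-energy of $g$; and (iii) the second formula deduced from the first through the layer-cake identity relating $J_\delta(f)$ to $\int_0^1\lambda^{\delta-1}I_\lambda(f)\,\diff\lambda$ plus an Abelian limit, which is exactly the paper's Lemma~\ref{IandJlem} and the closing lines of the proof of Theorem~\ref{thmlimits}. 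Where you genuinely diverge is in how the two key inputs are obtained. For (i), the paper runs a pmGH blow-up at $\mass$-a.e.\ point (Lemma~\ref{blowup}, then Proposition~\ref{mainprop}), which is forced in a general PI space where there are no polar coordinates and no linear structure; you instead exploit the Euclidean setting directly (polar coordinates, the rescaling $r=\delta s$, domination by $s^{-1-p}\chi_{\{s>1/L\}}$, and the rotational-invariance identity defining $C_{n,p}$), which is simpler but has no metric counterpart. For the uniform bound, the paper invokes Di Marino--Squassina (the constant $C_U$ of Theorem~\ref{cart}), while you reprove it in $\RR^n$ via the Haj\l{}asz-type pointwise inequality and the strong $(p,p)$ maximal estimate; this is essentially the mechanism underlying the cited result and correctly isolates the only place where $p>1$ is used. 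So your proof is a self-contained Euclidean rendering of the paper's strategy: same decomposition, same $\epsilon$-splitting device avoiding the $2^p$ loss, with the blow-up lemma and the Di Marino--Squassina bound replaced by their elementary Euclidean counterparts; what the paper's route buys in exchange is that it applies verbatim to PI spaces with unique Euclidean or Heisenberg tangents, where your polar-coordinate and maximal-function shortcuts are unavailable.
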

Similar results hold for BV spaces \cite{Davila,Ponce}, for magnetic Sobolev spaces \cite{HPV,HS19} and for the Heisenberg group \cite{CLL14}.\\
In \cite[Remark 6]{Brez02}, Brezis suggested generalizing the theory to more general metric measure spaces $(\XX, \dd, \mass)$. One of the first attempts in this direction was given by Di Marino--Squassina \cite{DiMarSquas}, who assumed the measure $\mass$ to be doubling and the space to support a $(1, p)$-Poincar\'e inequality. They showed, among other results, the following
\begin{thm}\label{cart}
			Let $(\XX,\dist,\mass)$ be a $p$-$\PI$ space for some $p\in(1,\infty)$. 
Then, there exist two constants $C_L,C_U> 0$, such that for every $f\in L^p(\XX)$, the following holds:
\begin{equation}
C_L{\rm Ch}_p(f)\leq \limsup_{\rho\searrow 0}\iint_{|f(x)-f(y)|>\delta}\frac{\delta^p}{\mass(B_{\dd(x,y)}(x))\dd(x,y)^p}\dd\mass(x)\dd\mass(y)\leq C_U{\rm Ch}_p(f),
\end{equation} 
where ${\rm Ch}_p(f)$ is defined in \eqref{Cheeg}.
\end{thm}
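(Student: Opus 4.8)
I would prove the two inequalities separately. The naive attack on the upper bound --- insert the pointwise Poincar\'e estimate $|f(x)-f(y)|\le C\dd(x,y)\big((M_{\Lambda\dd(x,y)}(g^p))(x)^{1/p}+(M_{\Lambda\dd(x,y)}(g^p))(y)^{1/p}\big)$ (valid on $p$-$\PI$ spaces, with $M_R$ the restricted maximal function and $\Lambda$ the dilation constant) and sum the weight $(\mass(B_{\dd(x,y)}(x))\dd(x,y)^p)^{-1}$ over dyadic annuli --- leaves $\int_\XX M(g^p)\,\dd\mass$, which is not controlled by $\int_\XX g^p\,\dd\mass$. I would therefore handle the upper bound by reducing to Lipschitz functions and using the asymptotic Lipschitz constant, and the lower bound by a diagonal restriction together with a closure argument. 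Write $\Phi_\delta(f)$ for the double integral inside the $\limsup$.

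\textbf{Upper bound.} If ${\rm Ch}_p(f)=\infty$ there is nothing to prove, so assume $f\in W^{1,p}(\XX)$. Since Lipschitz functions with bounded support are dense in the Cheeger energy on $\PI$ spaces, I would first reduce to that case; this step is not purely formal, since $f\mapsto\Phi_\delta(f)$ is only lower semicontinuous ($t\mapsto\mathbf 1_{|t|>\delta}$ being l.s.c.), so one must verify that replacing $f$ by an energy-approximating sequence does not decrease $\limsup_\delta\Phi_\delta$. For $f$ Lipschitz with bounded support, take $g=\lip_a f$, the asymptotic Lipschitz constant: it is upper semicontinuous, bounded, an upper gradient, and on $\PI$ spaces $\int_\XX(\lip_a f)^p\,\dd\mass\simeq{\rm Ch}_p(f)$. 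Given $\epsilon>0$, upper semicontinuity yields for each $x$ a radius $r(x)>0$ with $|f(x)-f(y)|\le(\lip_a f(x)+\epsilon)\dd(x,y)$ for $\dd(x,y)<r(x)$, and Egorov's theorem a set $G$ of almost full measure on which $r\ge r_0>0$. For $x\in G$ and $\delta$ small, $|f(x)-f(y)|>\delta$ forces either $\dd(x,y)\ge r_0$ --- a regime whose contribution is $O(\delta^p\mass(\operatorname{supp}f))\to 0$ --- or $\dd(x,y)>\delta/(\lip_a f(x)+\epsilon)$, in which case the dyadic-annular estimate of the $y$-integral, using doubling, bounds the contribution of $x$ by $C\delta^{-p}(\lip_a f(x)+\epsilon)^p$. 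Integrating in $x$, treating $\XX\setminus G$ crudely with the global Lipschitz constant, and letting $\mass(\XX\setminus G)\to 0$ and then $\epsilon\searrow 0$, we obtain $\limsup_\delta\Phi_\delta(f)\le C_U\,{\rm Ch}_p(f)$.

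\textbf{Lower bound.} For each $x$ and each $t>0$ one may keep only $y\in B_{t\delta}(x)$ in the double integral, and since $\mass(B_{\dd(x,y)}(x))\dd(x,y)^p\le\mass(B_{t\delta}(x))(t\delta)^p$ there, optimizing the auxiliary radius pointwise gives
\[
\Phi_\delta(f)\ \ge\ \int_\XX\sup_{t>0}\Big(\tfrac1{t^p}\fint_{B_{t\delta}(x)}\mathbf 1_{|f(x)-f(y)|>\delta}\,\dd\mass(y)\Big)\dd\mass(x)\;=:\;\int_\XX w_\delta(x)^p\,\dd\mass(x).
\]
Taking $t=\rho/\delta$ shows that $w_\delta$ controls $f$ at every scale via the truncated Poincar\'e-type bound $\fint_{B_\rho(x)}\mathbf 1_{|f(x)-f(y)|>\delta}\,\dd\mass(y)\le(\rho/\delta)^p w_\delta(x)^p$. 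Hence, if $\limsup_\delta\Phi_\delta(f)<\infty$, the family $\{w_\delta\}$ is bounded in $L^p$, so (using $p>1$) it has a weakly convergent subsequence $w_\delta\rightharpoonup\bar w$; feeding the truncated bounds --- across both scales and heights --- into a closure argument of the type used to show that the Korevaar--Schoen energy is comparable to ${\rm Ch}_p$ on $\PI$ spaces, one identifies a fixed multiple of $\bar w$ as a $p$-weak upper gradient of $f$, whence $f\in W^{1,p}(\XX)$ and ${\rm Ch}_p(f)\lesssim\limsup_\delta\Phi_\delta(f)$, which fixes $C_L$.

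\textbf{Main obstacle.} I expect the last identification in the lower bound to be the crux: unlike the Euclidean Theorem~\ref{thmlimitsEuclideo}, there is no slicing and no blow-up to a linear map, so the limit is not computable --- only estimable --- and one must couple the truncation height $\delta$ to the averaging radius so as not to discard the points where $|Df|$ is small, then pass an entire scale-dependent family of inequalities to the limit. The upper bound is comparatively soft, its only real subtlety being the (non-formal) reduction to Lipschitz functions.
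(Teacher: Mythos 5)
The paper does not prove Theorem \ref{cart} at all: it is imported verbatim from \cite[Theorem 1.5]{DiMarSquas}, so your sketch must stand on its own, and as written both halves stall exactly at the points you yourself flag. For the upper bound, the reduction to Lipschitz functions by density cannot be repaired. Lower semicontinuity of $\Phi_\delta$ under $L^p$-convergence gives, for each \emph{fixed} $\delta$, $\Phi_\delta(f)\le\liminf_l\Phi_\delta(f_l)$; to conclude $\limsup_{\delta}\Phi_\delta(f)\le C_U\,\Ch_p(f)$ from the bounds $\limsup_\delta\Phi_\delta(f_l)\le C_U\,\Ch_p(f_l)$ you would need to interchange $\limsup_{\delta\searrow0}$ with $\liminf_l$, which is false in general, and the natural repair via the splitting of Lemma \ref{tricklem} is circular: controlling $\limsup_\delta I_{\epsilon\delta}(f-f_l)$ by $\Ch_p(f-f_l)$ is precisely the upper bound for a non-Lipschitz Sobolev function. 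This is why the cited proof establishes the upper bound directly for every $f\in W^{1,p}(\XX)$, working with an upper gradient $g$ of $f$ itself rather than with a Lipschitz approximation. Moreover, the maximal-function obstruction with which you open (and on account of which you retreat to Lipschitz functions) has a standard cure in this circle of problems: by Keith--Zhong self-improvement a $p$-$\PI$ space supports a $(1,p_0)$-Poincar\'e inequality for some $p_0<p$, so the pointwise estimate can be run with $\big(M(g^{p_0})\big)^{1/p_0}$; then for fixed $x$ the condition $|f(x)-f(y)|>\delta$ forces $\dd(x,y)\gtrsim\delta\,\big(M(g^{p_0})(x)\big)^{-1/p_0}$ (or the symmetric condition in $y$, handled by doubling and Fubini), Lemma \ref{lemma28} bounds the inner integral by a constant times $\big(M(g^{p_0})(x)\big)^{p/p_0}$, and $\int\big(M(g^{p_0})\big)^{p/p_0}\dd\mass\lesssim\int g^p\dd\mass$ because $p/p_0>1$. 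Your Egorov argument for Lipschitz $f$ with bounded support is essentially fine, but it covers only a dense subclass, and the transfer to general $f$ is the actual content of the theorem.

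For the lower bound, the decisive step --- ``one identifies a fixed multiple of $\bar w$ as a $p$-weak upper gradient of $f$'' --- is asserted, not proved, and it is the whole difficulty. The inequality you extract only controls the \emph{density} of the exceptional set $\{y\in B_{t\delta}(x):|f(y)-f(x)|>\delta\}$ at the coupled scales $(\delta,t\delta)$; a weak $L^p$ limit of the functions $w_\delta$ is not an upper-gradient candidate in any direct sense, since upper-gradient inequalities are curve-wise and nothing in your bounds sees curves. Converting such median-type oscillation control into $f\in W^{1,p}(\XX)$ with $\Ch_p(f)\lesssim\limsup_\delta\Phi_\delta(f)$ requires an actual construction --- for instance median or averaged mollifications of $f$ at scale comparable to $\delta$, a chaining argument bounding their energy in terms of $w_\delta$ (plus maximal functions) on the good set and a separate treatment of the bad set, and finally lower semicontinuity of $\Ch_p$ along the $L^p$-convergence of the mollifications to $f$. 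That construction is what the proof in \cite{DiMarSquas} supplies; as written, your proposal records the two softer reductions and leaves both cruxes open.
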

Moreover, a closer look at \cite{DiMarSquas} provides the following two theorems. In particular, Theorem \ref{mainthm} is an immediate consequence of Theorem \ref{cart}, which is \cite[Theorem 1.5]{DiMarSquas}, whereas Theorem \ref{mainthm1} is extracted from the proof of  \cite[Theorem 1.5]{DiMarSquas}. As the second result is not explicitly stated in \cite{DiMarSquas}, we add a few lines concerning its proof in Section \ref{sectproof}.
	\begin{thm}\label{mainthm}
		Let $(\XX,\dist,\mass)$ be a $p$-$\PI$ space for some $p\in(1,\infty)$.
		Let $f\in L^{p}(\XX)$. Then $f\in W^{1,p}(\XX)$ if and only if 
		$$
		\sup_{\delta\in(0,1)}\iint_{|f(x)-f(y)|>\delta}\frac{\delta^p}{\mass(B_{\dist(x,y)}(x))\dist(x,y)^p}\dd\mass(y)\dd\mass(x)<\infty.
		$$		
	\end{thm}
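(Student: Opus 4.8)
The plan is to obtain Theorem \ref{mainthm} as an essentially immediate corollary of the two-sided estimate in Theorem \ref{cart}, the only extra ingredient being an elementary scaling argument that promotes control of the $\limsup$ as $\delta\searrow0$ to control of the full supremum over $\delta\in(0,1)$. Write, for $\delta\in(0,1)$,
\[
\Phi(\delta)\defeq\iint_{|f(x)-f(y)|>\delta}\frac{\delta^p}{\mass(B_{\dist(x,y)}(x))\dist(x,y)^p}\dd\mass(y)\dd\mass(x),
\]
and recall that on a $p$-$\PI$ space a function $f\in L^p(\XX)$ belongs to $W^{1,p}(\XX)$ if and only if its Cheeger $p$-energy $\Ch_p(f)$ is finite.

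First I would prove the \emph{if} part. If $\sup_{\delta\in(0,1)}\Phi(\delta)<\infty$, then in particular $\limsup_{\delta\searrow0}\Phi(\delta)<\infty$, so the left inequality in Theorem \ref{cart} yields $C_L\,\Ch_p(f)\le\limsup_{\delta\searrow0}\Phi(\delta)<\infty$, whence $\Ch_p(f)<\infty$ and $f\in W^{1,p}(\XX)$. This direction uses Theorem \ref{cart} verbatim.

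For the \emph{only if} part, let $f\in W^{1,p}(\XX)$, so that $\Ch_p(f)<\infty$. By the right inequality in Theorem \ref{cart}, $\limsup_{\delta\searrow0}\Phi(\delta)\le C_U\,\Ch_p(f)<\infty$, hence there exist $\delta_0\in(0,1)$ and $M<\infty$ with $\Phi(\delta)\le M$ for every $\delta\in(0,\delta_0]$. To handle $\delta\in(\delta_0,1)$ I would note two things: the integration domain only shrinks, $\{|f(x)-f(y)|>\delta\}\subseteq\{|f(x)-f(y)|>\delta_0\}$, while the numerical prefactor satisfies $\delta^p<1=\delta_0^{-p}\,\delta_0^p$; since the kernel is nonnegative, these two observations combine to give $\Phi(\delta)\le\delta_0^{-p}\,\Phi(\delta_0)\le\delta_0^{-p}M$. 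Taking the supremum over both ranges of $\delta$ yields $\sup_{\delta\in(0,1)}\Phi(\delta)\le\delta_0^{-p}M<\infty$, which is the claim.

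I do not expect any serious obstacle here — this is precisely why the statement is only ``extracted from'' rather than displayed in \cite{DiMarSquas}. The single point deserving care is that $\Phi$ is \emph{not} monotone in $\delta$, since enlarging $\delta$ both shrinks the region of integration and enlarges the factor $\delta^p$; consequently the passage from $\limsup_{\delta\searrow0}$ to $\sup_{\delta\in(0,1)}$ cannot be made by monotonicity, but must go through the inclusion-and-scaling estimate above (or any equivalent crude bound for $\delta$ bounded away from $0$).
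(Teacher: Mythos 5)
Your proposal is correct and follows the paper's own (implicit) route: the paper derives Theorem \ref{mainthm} as an ``immediate consequence'' of Theorem \ref{cart}, which is exactly what you do, using the lower bound $C_L\,\Ch_p(f)\le\limsup_{\delta\searrow0}\Phi(\delta)$ for one direction and the upper bound for the other. Your explicit bridging estimate $\Phi(\delta)\le\delta_0^{-p}\Phi(\delta_0)$ for $\delta\in(\delta_0,1)$, obtained from the shrinking integration domain and $\delta^p\le 1$, is precisely the elementary step the paper leaves unstated, and your remark that $\Phi$ is not monotone in $\delta$ correctly identifies why such a step is needed.
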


\begin{thm}\label{mainthm1}
	Let $(\XX,\dist,\mass)$ be a $p$-$\PI$ space for some $p\in(1,\infty)$.
	Let $f\in L^{p}(\XX)$. Then $f\in W^{1,p}(\XX)$ if and only if 
	$$
		\sup_{\delta\in(0,1)}\iint_{|f(x)-f(y)|\le 1}\frac{\delta|f(x)-f(y)|^{p+\delta}}{\mass(B_{\dist(x,y)}(x))\dist(x,y)^p}\dd\mass(y)\dd\mass(x)<\infty
	$$
	and 
	$$
	\iint_{|f(x)-f(y)|>  1}\frac{1}{\mass(B_{\dist(x,y)}(x))\dist(x,y)^p}\dd\mass(y)\dd\mass(x)<\infty.$$
\end{thm}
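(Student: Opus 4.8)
The plan is to reduce both implications to Theorem~\ref{mainthm} --- equivalently, to the proof of Theorem~\ref{cart} --- by means of a layer-cake identity. For $t\in(0,1)$ set
\[
H(t)\defeq\iint_{t<|f(x)-f(y)|\le1}\frac{\dd\mass(y)\dd\mass(x)}{\mass(B_{\dist(x,y)}(x))\dist(x,y)^p},\qquad T\defeq\iint_{|f(x)-f(y)|>1}\frac{\dd\mass(y)\dd\mass(x)}{\mass(B_{\dist(x,y)}(x))\dist(x,y)^p}.
\]
Splitting $\{|f(x)-f(y)|>\delta\}$ into $\{\delta<|f(x)-f(y)|\le1\}$ and $\{|f(x)-f(y)|>1\}$ gives, for $\delta\in(0,1)$,
\[
\iint_{|f(x)-f(y)|>\delta}\frac{\delta^p\,\dd\mass(y)\dd\mass(x)}{\mass(B_{\dist(x,y)}(x))\dist(x,y)^p}=\delta^p\bigl(H(\delta)+T\bigr),
\]
while writing $a^{p+\delta}=\int_0^a(p+\delta)s^{p+\delta-1}\dd s$ for $a\in[0,1]$ and applying Fubini's theorem gives
\[
\iint_{|f(x)-f(y)|\le1}\frac{\delta|f(x)-f(y)|^{p+\delta}\,\dd\mass(y)\dd\mass(x)}{\mass(B_{\dist(x,y)}(x))\dist(x,y)^p}=\delta(p+\delta)\int_0^1 s^{p+\delta-1}H(s)\,\dd s.
\]
So the task is to transfer information between the pointwise quantity $\delta\mapsto\delta^pH(\delta)$ and its weighted average $\delta\mapsto\delta(p+\delta)\int_0^1 s^{p+\delta-1}H(s)\dd s$, while keeping the tail $T$ under control.

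For the necessity of the two conditions, suppose $f\in W^{1,p}(\XX)$. By Theorem~\ref{mainthm}, $M\defeq\sup_{\delta\in(0,1)}\delta^p(H(\delta)+T)<\infty$. Then $T\le M\delta^{-p}$, and letting $\delta\nearrow1$ gives $T\le M$, which is the second condition; moreover $H(s)\le Ms^{-p}$, so the second identity yields $\delta(p+\delta)\int_0^1 s^{p+\delta-1}H(s)\dd s\le\delta(p+\delta)M\int_0^1 s^{\delta-1}\dd s=(p+\delta)M\le(p+1)M$ uniformly in $\delta$, which is the first condition. This direction is routine.

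For sufficiency, assume both conditions and set $\Lambda\defeq\sup_{\delta\in(0,1)}\delta(p+\delta)\int_0^1 s^{p+\delta-1}H(s)\dd s<\infty$. Here is the subtle point: the identities above do \emph{not} yield a uniform bound on $\delta^pH(\delta)$ for $\delta\in(0,1)$ --- one can arrange $H$ so that the new functional stays bounded while $\delta^pH(\delta)$ is unbounded --- so Theorem~\ref{mainthm} cannot be quoted verbatim. What \emph{is} true is $\liminf_{\delta\searrow0}\delta^pH(\delta)\le\Lambda/p$. Indeed, were this false there would be $L>\Lambda/p$ and $\delta_0\in(0,1)$ with $H(\delta)>L\delta^{-p}$ for all $\delta\in(0,\delta_0)$; then for every $\rho\in(0,\delta_0)$,
\[
\frac{\Lambda}{p\rho}\ge\int_0^1 s^{p+\rho-1}H(s)\,\dd s\ge\int_0^\rho s^{p+\rho-1}\,L\,s^{-p}\,\dd s=L\,\frac{\rho^\rho}{\rho},
\]
so $L\rho^\rho\le\Lambda/p$, and letting $\rho\searrow0$ (so that $\rho^\rho\to1$) forces $L\le\Lambda/p$, a contradiction. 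Since $T<\infty$ makes $\delta^pT\to0$, the first identity then gives $\liminf_{\delta\searrow0}\iint_{|f(x)-f(y)|>\delta}\frac{\delta^p\,\dd\mass(y)\dd\mass(x)}{\mass(B_{\dist(x,y)}(x))\dist(x,y)^p}<\infty$.

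It remains to deduce $f\in W^{1,p}(\XX)$ from this one-sided, \emph{sequential} control on Nguyen's functional. This is exactly what the lower bound half of Theorem~\ref{cart} provides: its proof (that of \cite[Theorem~1.5]{DiMarSquas}) only uses the functional along a sequence $\delta_k\searrow0$, and therefore in fact gives $C_L\Ch_p(f)\le\liminf_{\delta\searrow0}(\cdots)$; combined with the liminf bound above this forces $\Ch_p(f)<\infty$, i.e.\ $f\in W^{1,p}(\XX)$. The main obstacle is precisely this last step --- recognising that one should aim for a liminf rather than a uniform-in-$\delta$ bound, and that (the proof of) Theorem~\ref{cart} already delivers Sobolev regularity from control along a single sequence of scales --- with the hypothesis $T<\infty$ entering exactly to absorb the contribution of $\{|f(x)-f(y)|>1\}$ to Nguyen's functional.
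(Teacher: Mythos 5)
Your necessity direction is correct and is essentially the paper's: Theorem \ref{mainthm} combined with the layer-cake identity of Lemma \ref{IandJlem}. The problem is in the sufficiency direction, at the very last step. Your reduction only yields $\liminf_{\delta\searrow 0}I_\delta(f)<\infty$ (the $\rho^{\rho}\to 1$ argument is correct, and you rightly note that no uniform bound on $\delta^{p}H(\delta)$ is available), and to conclude $f\in W^{1,p}(\XX)$ you then invoke a \emph{liminf} form of the lower bound, $C_L\,\Ch_p(f)\le\liminf_{\delta\searrow 0}I_\delta(f)$, justified by the claim that the proof of \cite[Theorem 1.5]{DiMarSquas} ``only uses the functional along a sequence $\delta_k\searrow 0$''. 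That claim is unsubstantiated and is in fact contradicted by the structure of that proof as used in this paper: the lower bound in \cite{DiMarSquas} is obtained by integrating $I_\delta(f)$ over \emph{all} $\delta\in(0,1)$ against the weight $\epsilon\delta^{\epsilon-1}$ (exactly the computation recorded in Lemma \ref{IandJlem}), so as to control the truncated Maz'ya--Shaposhnikova-type quantity
\begin{equation}
G_\delta(f)\defeq\iint_{\XX\times\XX}\frac{\delta\,(|f(x)-f(y)|\wedge 1)^{p+\delta}}{\mass(B_{\dist(x,y)}(x))\dist(x,y)^p}\dd\mass(y)\dd\mass(x),
\end{equation}
and it is boundedness of $G_\delta(f)$ as $\delta\searrow 0$ that produces Sobolev regularity there. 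Control of $I_\delta$ along a single sequence gives no control whatsoever on these weighted integrals (between the good scales $I_\delta$ may blow up), so the cited proof does not deliver the liminf inequality; consistently, Theorem \ref{cart} is stated with a $\limsup$. In the Euclidean case the corresponding liminf statement is a genuinely harder theorem of Nguyen, not a formal consequence of the sup-characterization, and no metric analogue is provided by anything quoted here. As written, then, your sufficiency argument rests on an unproved strengthening of Theorem \ref{cart}.

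The repair is also the paper's route, and it is shorter than your detour: the two hypotheses are \emph{equivalent} to $\sup_{\delta\in(0,1)}G_\delta(f)<\infty$, since $(a\wedge 1)^{p+\delta}=a^{p+\delta}\chi_{\{a\le 1\}}+\chi_{\{a>1\}}$ gives $G_\delta(f)=J_\delta(f)+\delta\,T$ in your notation; and it is precisely from this uniform bound that the lower-bound argument of \cite[Theorem 1.5]{DiMarSquas} concludes $f\in W^{1,p}(\XX)$. In other words, one should feed the hypotheses directly into the $G_\delta$-machinery rather than converting them back into (only liminf) information on Nguyen's functional $I_\delta$.
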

In this paper we generalize Theorem \ref{thmlimitsEuclideo}  to the context of $\PI$ metric measure spaces admitting unique tangent cone. To be more precise, we consider $\PI$  metric measure spaces such that their tangents (in the Gromov--Hausdorff sense) for $\mass$-a.e.\ $x\in \XX$ are either Euclidean spaces with a fixed dimension $n$ or the Heisenberg group with a fixed homogeneous dimension $Q$. This class contains many relevant examples, for instance closed Riemannian manifolds, weighted Euclidean spaces for continuous weights bounded from above and below, $\RCD(0,N)$ spaces, Ahlfors $\RCD(K,N)$ spaces, $\RCD(K,N)$ spaces  with  finite measure and the Heisenberg group.
Let us explicitly point out that the proof of Theorem \ref{mainthmeqeuclideo} deeply relies on some important geometric facts, such as that any two points can be connected by a line-segment and the uniformity in every direction of the unit sphere. Both those facts are precluded in general metric measure spaces. For this reason our proof is completely different from the classical one and it resembles the one, via a blow-up, used in \cite{Gorny} to prove a Bourgain--Brezis--Mironescu type theorem in the metric setting, see \cite{BPP,LaPiZh22}. 
\subsection{Statement of results}
Let us now precisely state the main results of this paper. We start setting the main assumptions on the space $(\XX,\dist,\mass)$ (see \eqref{normalized1} and \eqref{normalized2} for the definition of the renormalized measures):
	\begin{ass}\label{Ass1}
	We say that a $\PI$ space $(\XX,\dist,\mass)$ satisfies Assumption \ref{Ass1}  if \begin{equation} \label{tangA}
	{\mathrm {Tan}}_x(\XX,\dist,\mass)=\{(\RR^n,\dist_e,\underline{\LL}^n,0)\}\qquad\text{for $\mass$-a.e.\ $x\in\XX$}\end{equation}
	and \begin{equation}\label{densityA}
		\exists\Theta_n(\mass,x)\defeq\lim_{r\searrow 0}\frac{\mass(B_r(x))}{\omega_n r^n}\in (0,\infty)\qquad\text{for $\mass$-a.e.\ $x\in\XX$}.
	\end{equation}
\end{ass}
Alternatively,
\begin{ass}\label{Ass2}
	We say that a $\PI$ space $(\XX,\dist,\mass)$ satisfies Assumption \ref{Ass2}  if 
 \begin{equation}\label{cdsasacds}
     {\mathrm {Tan}}_x(\XX,\dist,\mass)=\{(\mathbb{H}^n,\dist_{\mathbb{H}},\underline{\LL}^{2n+1},0)\}\qquad\text{for $\mass$-a.e.\ $x\in\XX$}
 \end{equation}
	and \begin{equation}\label{density2}
		\exists\Theta_n(\mass,x)\defeq\lim_{r\searrow 0}\frac{\mass(B_r(x))}{\omega_n r^{2n+2}}\in (0,\infty)\qquad\text{for $\mass$-a.e.\ $x\in\XX$}.
	\end{equation}
\end{ass}
We point out that spaces satisfying Assumption \ref{Ass1} are $n$-rectifiable (\cite{BateMM}). A similar conclusion holds for spaces satisfying Assumption \ref{Ass2}, provided that \cite[Conjeture 7.1]{Ant23} holds true.

\begin{defn}\label{defnormconst}
    We define, for $n\in\NN$ and $p\in(1,\infty)$,
\begin{equation}
	C_{n,p}\defeq\begin{cases}
	    \displaystyle\int_{\{v\in \RR^n:|v|=1\}}|e\,\cdot\,v|^p\dd\HH^{n-1}(v)\qquad&\parbox{15em}{for any $e\in \RR^{n}$ with $|v|=1$,\\in the case of Assumption \ref{Ass1},}\\\\\displaystyle\int_{\{v\in \mathbb{H}^n:\|v\|=1\}}|\zeta\,\cdot\, v|^p\dd \sigma(v)\qquad&\parbox{20em}{for any horizontal $\zeta\in \mathbb{H}^n$ with $\|\zeta\|=1$,\\in the case of Assumption \ref{Ass2},}
	\end{cases} 
\end{equation}
where $\sigma$ is a suitable Radon measure, see the proof of Lemma \ref{blowuph}.
\end{defn}

We state now the main result of this note. Its proof is deferred to Section \ref{sectproof}.
\begin{thm}\label{thmlimits}
		Let $(\XX,\dist,\mass)$ be a $p$-$\PI$ space for some $p\in(1,\infty)$, satisfying either Assumption \ref{Ass1} or Assumption \ref{Ass2}. Let   $f\in W^{1,p}(\XX)$.  Then
		\begin{equation}\label{mainthmeq}
			\lim_{\delta\searrow 0} \iint_{|f(x)-f(y)|>\delta}\frac{\delta^p}{\mass(B_{\dist(x,y)}(x))\dist(x,y)^p}\dd\mass(y)\dd\mass(x)=\frac{C_{n,p}} {p \omega_n}\int_\XX|\DIFF_p f|^p(x)\dd\mass(x)
		\end{equation}
	and 
		\begin{equation}\label{mainthmeq1}
		\lim_{\delta\searrow 0}\iint_{|f(x)-f(y)|\le 1}\frac{\delta|f(x)-f(y)|^{p+\delta}}{\mass(B_{\dist(x,y)}(x))\dist(x,y)^p}\dd\mass(y)\dd\mass(x)=\frac{C_{n,p}} { \omega_n}\int_\XX|\DIFF_p f|^p(x)\dd\mass(x),
	\end{equation}
		where $C_{n,p}$ is as in Definition \ref{defnormconst}.
\end{thm}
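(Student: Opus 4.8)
The plan is to reduce the statement to a blow-up analysis at $\mass$-a.e.\ point and then use the two one-sided estimates already available from Di~Marino--Squassina. Concretely, write
\[
\mathcal{F}_\delta(f)\defeq\iint_{|f(x)-f(y)|>\delta}\frac{\delta^p}{\mass(B_{\dist(x,y)}(x))\dist(x,y)^p}\dd\mass(y)\dd\mass(x),
\]
and similarly $\mathcal{G}_\delta(f)$ for the second functional. By Theorem~\ref{mainthm} and Theorem~\ref{mainthm1} we already know $\limsup_{\delta\searrow0}\mathcal{F}_\delta(f)<\infty$ and $\limsup_{\delta\searrow0}\mathcal{G}_\delta(f)<\infty$ when $f\in W^{1,p}(\XX)$; the content of the theorem is identifying the limit. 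First I would localize: for a fixed $f\in W^{1,p}(\XX)$ and a point $x_0$ of approximate differentiability (with respect to the minimal relaxed gradient, using that under Assumption~\ref{Ass1} or~\ref{Ass2} the space is $n$- or $Q$-rectifiable and $W^{1,p}$ admits a differentiable structure, e.g.\ via Cheeger's theorem and the uniqueness of tangents), split the inner integral into a contribution from $y\in B_r(x_0)$ with $r\to0$ and a contribution from $y$ far away. The far contribution is negligible after rescaling because the kernel $\mass(B_{\dist(x,y)}(x))^{-1}\dist(x,y)^{-p}$ decays and $f\in L^p$ controls the measure of the superlevel set $\{|f(x)-f(y)|>\delta\}$.

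The heart of the argument is the blow-up. Fix a density point $x_0$ where the tangent is Euclidean $\RR^n$ (resp.\ the Heisenberg group $\heis^n$) with density $\Theta_n(\mass,x_0)$, and where $f$ is differentiable with differential $L$, i.e.\ $f(x)=f(x_0)+L(x-x_0)+o(\dist(x,x_0))$ in the appropriate rescaled sense, with $|\DIFF_p f|(x_0)=|L|$ computed in the tangent norm. Rescale the space by $\dist\mapsto\dist/r$ and the measure by $\mass\mapsto\mass(B_r(x_0))^{-1}\mass$ (as in \eqref{normalized1}, \eqref{normalized2}); by the pointwise-measure form of Gromov--Hausdorff convergence to the unique tangent, the rescaled spaces converge to $(\RR^n,\dist_e,\underline{\LL}^n,0)$ (resp.\ $(\heis^n,\dist_\heis,\underline{\LL}^{2n+1},0)$). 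Under the same rescaling, with $\delta$ scaled like $r$, the localized functional at $x_0$ converges to the corresponding Euclidean/Heisenberg model integral evaluated on the linear function $v\mapsto L(v)$, which by a direct computation in $\RR^n$ (resp.\ $\heis^n$ using horizontal dilations) equals $\tfrac{C_{n,p}}{p\,\omega_n}|L|^p$ for $\mathcal{F}$ and $\tfrac{C_{n,p}}{\omega_n}|L|^p$ for $\mathcal{G}$ — this is exactly where Theorem~\ref{thmlimitsEuclideo} and its Heisenberg analogue \cite{CLL14} (or a direct slicing computation producing the constant $C_{n,p}$ and the auxiliary measure $\sigma$ of Definition~\ref{defnormconst}) enter. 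Lemma~\ref{blowuph}, invoked in Definition~\ref{defnormconst}, presumably carries out this model computation and the identification of $\sigma$.

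Having the pointwise blow-up limit $\lim_{\delta\searrow0}(\text{localized density at }x_0)=\tfrac{C_{n,p}}{p\,\omega_n}|\DIFF_p f|^p(x_0)$ for $\mass$-a.e.\ $x_0$, the final step is to integrate this over $\XX$ and exchange limit and integral. The clean way is to use the two Di~Marino--Squassina bounds to produce an equi-integrable dominating family: since $\mathcal{F}_\delta(f)\le C_U\,\Ch_p(f)$ uniformly and the integrand densities are nonnegative, Fatou in one direction gives $\liminf_{\delta\searrow0}\mathcal{F}_\delta(f)\ge\tfrac{C_{n,p}}{p\,\omega_n}\int_\XX|\DIFF_p f|^p\dd\mass$, and a matching upper bound $\limsup_{\delta\searrow0}\mathcal{F}_\delta(f)\le\tfrac{C_{n,p}}{p\,\omega_n}\int_\XX|\DIFF_p f|^p\dd\mass$ follows either from a truncation-plus-density argument (first for $f$ bounded Lipschitz with compact support, where the far-away term and the error term in the differentiability expansion are uniformly controlled, then by density in $W^{1,p}$ using the uniform bound to pass to the limit) or from a Vitali-type equi-integrability estimate on the family $\{x_0\mapsto\text{localized density}\}_{\delta}$. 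The same scheme applies verbatim to $\mathcal{G}_\delta$. I expect the main obstacle to be the uniform control of the error terms in the blow-up: showing that the remainder coming from the $o(\dist)$ in the differentiability expansion, integrated against the singular kernel, is genuinely negligible after rescaling and uniformly so in a way that survives the integration over $x_0$ — in other words, upgrading the $\mass$-a.e.\ pointwise convergence to convergence of the integrals. This is precisely the delicate measure-theoretic step where the PI-assumption (doubling plus Poincaré, hence the Di~Marino--Squassina bounds and a Vitali covering argument) does the work, and where one must be careful that the $y$-far tail and the non-Lipschitz part of $f$ are handled by an $L^p$-approximation argument rather than pointwise.
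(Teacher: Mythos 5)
Your sketch of the Lipschitz core (blow-up along rescalings $\dist/\delta$, $\mass/\mass(B_\delta(x))$ to the unique tangent, tail control via the doubling estimate of Lemma \ref{lemma28}, model computation producing $C_{n,p}$) matches the paper's Lemmas \ref{blowup}--\ref{blowuph} and Proposition \ref{mainprop}. The genuine gaps are in how you pass from Lipschitz functions to a general $f\in W^{1,p}(\XX)$. Your lower bound rests on Fatou applied to the pointwise limit of the inner integral \emph{for $f$ itself}, at points of ``approximate differentiability''; but the blow-up argument available here (Cheeger's theorem, Arzel\`a--Ascoli) gives locally uniform convergence of the rescalings only for Lipschitz $f$, and for a general Sobolev function the indicator $\chi_{\{|f(x)-f(y)|>\delta\}}$ on the exceptional set of the $L^p$-Taylor expansion is not controlled against the singular kernel; establishing the a.e.\ pointwise limit for general $f$ is essentially as hard as the theorem itself and is precisely what the paper avoids. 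Likewise, your upper bound ``by density in $W^{1,p}$ using the uniform bound'' cannot close as stated: the Di Marino--Squassina bound gives $\limsup_\delta I_\delta(f-f_l)\le C_U\,\Ch_p(f-f_l)$ with a non-sharp constant, and since $I_\delta$ is neither convex nor subadditive you need a specific splitting to combine $I_\delta(f)$, $I_\delta(f_l)$ and $I_\delta(f-f_l)$. The paper does this with Lemma \ref{tricklem}, i.e.\ $I_\delta(f)\le (1-\epsilon)^{-p}I_{(1-\epsilon)\delta}(f_l)+\epsilon^{-p}I_{\epsilon\delta}(f-f_l)$, and obtains \emph{both} bounds (the lower one by exchanging the roles of $f$ and $f_l$), then lets $l\to\infty$ and $\epsilon\searrow 0$. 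This two-parameter splitting is the missing ingredient in your density step, and it also replaces your problematic Fatou argument.

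The second gap concerns \eqref{mainthmeq1}: the scheme does \emph{not} apply ``verbatim'' to $J_\delta$. Unlike $I_\delta$, the functional $J_\delta$ is not localized at scale $\delta$: the factor $\delta\,|f(x)-f(y)|^{p+\delta}$ against the kernel produces a logarithmic accumulation over all scales $r\in(0,1)$ (in the Euclidean model, $\delta\int_0^\rho r^{\delta-1}\,\dd r=\rho^\delta$), so a single pmGH blow-up limit extracted along a subsequence $\delta_k\searrow 0$ does not capture the inner integral, and one would need quantitative convergence to the tangent uniformly over all small scales. The paper sidesteps this entirely via the identity of Lemma \ref{IandJlem}, which writes $\int_0^1\epsilon\delta^{\epsilon-1}I_\delta(f)\,\dd\delta$ in terms of $J_\epsilon(f)$ plus a remainder, so that \eqref{mainthmeq1} follows from \eqref{mainthmeq} by letting $\epsilon\searrow 0$; some such averaging-over-scales device (or a genuinely multi-scale blow-up argument) is needed and is absent from your proposal.
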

As far as we know, Theorem \ref{thmlimits} represents the first extension of Theorem \ref{thmlimitsEuclideo} to a large class of metric measure spaces and it paves the way to the study of more general functionals in the setting of metric measure spaces. We remark that the assumption that the tangent space is in a certain sense unique is rather sharp, see e.g.\ \cite[Example 4.4]{Gorny}.

\section{Preliminaries}
In this section we collect some preliminary notions that we will use throughout the paper. We use standard notation for the functional spaces (e.g.\ $\LIP$ denotes the space of Lipschitz functions) and we add the subscript $\mathrm{bs}$ to denote the subspace of functions with bounded support.
\\

\noindent\textbf{Sobolev spaces.}\\
By metric measure space we mean a triplet \((\XX,\dist,\mass)\), where
$(\XX,\dist)$ is a complete and separable metric space and $\mass$ is a Borel measure on $\XX$ that is finite on balls. A pointed metric measure is a quadruplet \((\XX,\dist,\mass,x)\), where \((\XX,\dist,\mass)\) is a metric measure space and $x\in\XX$.

Given any \(p\in(1,\infty)\), the Cheeger \(p\)-energy \({\rm Ch}_p(f)\) of a function \(f\in L^p(\mass)\) is defined as
\begin{equation}\label{Cheeg}
{\rm Ch}_p(f)\coloneqq\inf_{(f_n)}\liminf_{n}\int\lip(f_n)^p\dd\mass,
\end{equation}
where the infimum is taken among all sequences  \((f_n)_n\subseteq \LIP_{\mathrm{bs}}(\XX)\cap L^p(\mass)\)
such that \(f_n\to f\) in \(L^p(\mass)\). Here, the slope \(\lip(g)\colon\XX\to[0,+\infty)\) of $g\in\LIP(\XX)$ is
\[
\lip(g)(x)\defeq\limsup_{y\to x}\frac{|g(x)-g(y)|}{\dist(x,y)},
\]
where we understand \(\lip(g)(x)= 0\) if $x$ is isolated. The Sobolev space \(W^{1,p}(\XX)\) is defined as
\[
W^{1,p}(\XX)\coloneqq\big\{f\in L^p(\mass)\;\big|\;{\rm Ch}_p(f)<+\infty\big\}.
\]
Given any \(f\in W^{1,p}(\XX)\), there exists a unique function \(|\DIFF_p f|\in L^p(\mass)\), called the {minimal \(p\)-relaxed slope}
of \(f\), and the Cheeger \(p\)-energy of \(f\) can be represented as
\[
{\rm Ch}_p(f)=\int|\DIFF_p f|^p\dd\mass.
\]
See \cite{AmbrosioGigliSavare11,AmbrosioGigliSAvare11-3} after \cite{Cheeger00} for more details about the above notions and results. We stress that $|\DIFF_p f|$, in general, depends on $p$ (however, this does not happen on $\PI$ spaces, see e.g.\ \cite[Appendix C]{ACM14} or \cite[Theorem A.9]{Bjorn-Bjorn11}). 
\\

\noindent\textbf{PI spaces.}\\We recall the definition of the class of $\PI$ spaces.
\begin{defn}
Let \((\XX,\dist,\mass)\) be a metric measure space. Then:
\begin{itemize}
\item[\(\rm i)\)] We say that \((\XX,\dist,\mass)\) is {uniformly} doubling if 
there exists a constant \(C_D>0\) such that
\begin{equation}\label{fulldoubing}
\mass(B_{2r}(x))\leq C_D\mass(B_r(x))\qquad\text{for every }x\in\XX\text{ and }r>0.    
\end{equation}
\item[\(\rm ii)\)] For $p\in (1,\infty)$, we say that \((\XX,\dist,\mass)\) supports a \((1,p)\)-Poincar\'{e} inequality if there exists a constant \(C_P>0\) such that
\[
\dashint_{B_r(x)}\Big|f-\dashint_{B_r(x)}f\dd\mass\Big|\dd\mass\leq C_P r\bigg(\dashint_{B_{ r}(x)}\lip(f)^p\dd\mass\bigg)^{1/p}
\qquad\text{for every }x\in\XX\text{ and }r>0,
\]
whenever $f\in\LIP_{\mathrm{bs}}(\XX)$.
\item[\(\rm iii)\)] We say that \((\XX,\dist,\mass)\) is a $p$-$\PI$ space if it is uniformly doubling and
it supports a \((1,p)\)-Poincar\'{e} inequality, i.e.\ if it satisfies items $\rm i)$ and $\rm ii)$ above. If the value of $p\in (1,\infty)$ is not important, we simply say that \((\XX,\dist,\mass)\) is a $\PI$ space. 
\end{itemize}
\end{defn}
	\begin{rem}\label{verodoub}
 We will often need Lemma \ref{lemma28} below, which is taken from \cite{DiMarSquas}. We remind that, to apply the lemma to a metric measure space $(\XX,\dist,\mass)$, we need the full doubling condition on the reference measure, i.e.\ the existence of some $C_D>0$ satisfying \eqref{fulldoubing}. In particular, the local uniform doubling property, that asks that for every $R>0$ there exists $C_D(R)>0$ such that 
		\begin{equation}
			\mass(B_{2 r}(x))\le C_D(R)\mass(B_r(x))\qquad\text{for every $x\in\XX$ and $r\in(0,R)$}
		\end{equation}
		is not sufficient for our purposes. 
  This is the reason why we asked the validity of \eqref{fulldoubing} in the definition of $\PI$ space, even though it is now customary to include in the class of $\PI$ spaces even spaces that are only locally uniformly doubling. Similar considerations hold for what concerns the statement of the Poincar\'{e} inequality, see the assumptions in \cite{DiMarSquas}.

Nevertheless, to prove our main Theorem \ref{thmlimits}, a weaker form of the $(1,p)$-Poincar\'{e} inequality is enough: it is enough to ask that 
there exist a constant \(\lambda\geq 1\)  and, for any radius \(R>0\), a constant \(C_P=C_P(R)>0\), such that
\[
\dashint_{B_r(x)}\Big|f-\dashint_{B_r(x)}f\dd\mass\Big|\dd\mass\leq C_P r\bigg(\dashint_{B_{\lambda r}(x)}\lip(f)^p\dd\mass\bigg)^{1/p}
\qquad\text{for every }x\in\XX\text{ and }r\in(0,R),
\]
whenever $f\in\LIP_{\mathrm{bs}}(\XX)$. This is because the Poincar\'{e} inequality is only needed to employ the results of \cite{Cheeger00} (in \cite{Cheeger00} the standing assumption, besides the doubling condition on the measures, was this weaker form of the Poincar\'{e} inequality):
\begin{itemize}
    \item in the blow-up argument of Lemma \ref{blowup} and Lemma \ref{blowuph}, we need the characterization of blow-ups of Lipschitz functions as generalized linear functions,
    \item in the proof of Theorem \ref{thmlimits}, we need the identification between local Lipschitz constant and minimal weak upper gradient, see \eqref{eq:|Df|=lipf} below.
 \end{itemize} 
 Notice that, however,  the uniform doubling condition is still needed to prove Theorem \ref{thmlimits} (see the proof of Proposition \ref{mainprop}). 
\fr
	\end{rem}
 \begin{lem}[{\cite[Lemma 2.8]{DiMarSquas}}]\label{lemma28}
 Let $(\XX,\dist,\mass)$ be a uniformly doubling metric measure space. Then, for every $x\in \XX$ and $R>0$,
\begin{equation}\label{lemma28eq}
\int_{\XX\setminus B_{R}(x)} \frac{1}{\mass(B_{\dist(x,y)}(x))\dist(x,y)^p}\dd\mass(y)\le  \frac{C}{R^p},
\end{equation}
where the constant $C>0$ depends only on $(\XX,\dist,\mass)$.
 \end{lem}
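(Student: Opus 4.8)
The plan is a routine dyadic decomposition of $\XX\setminus B_R(x)$ into annuli centered at $x$, on each of which the integrand is controlled using only the doubling inequality \eqref{fulldoubing}.

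First I would dispose of a degenerate case. If $\mass(B_{r_0}(x))=0$ for some $r_0>0$, then iterating \eqref{fulldoubing} gives $\mass(B_{2^j r_0}(x))\le C_D^{\,j}\,\mass(B_{r_0}(x))=0$ for every $j\in\NN$, and since $2^j r_0\to\infty$ we conclude $\mass(\XX)=0$; in that case the left-hand side of \eqref{lemma28eq} vanishes and there is nothing to prove. So from now on $\mass(B_r(x))>0$ for every $r>0$, and in particular the integrand is finite for $\mass$-a.e.\ $y$.

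Next, I would set $A_k\defeq B_{2^{k+1}R}(x)\setminus B_{2^k R}(x)$ for $k\in\NN\cup\{0\}$. Since every point of $\XX$ lies at finite distance from $x$, the family $(A_k)_{k\ge 0}$ is a partition of $\XX\setminus B_R(x)$. For $y\in A_k$ one has $\dist(x,y)\ge 2^k R$, hence $\dist(x,y)^p\ge 2^{kp}R^p$ and, by monotonicity of $\mass$, $\mass(B_{\dist(x,y)}(x))\ge\mass(B_{2^k R}(x))$; moreover $\mass(A_k)\le\mass(B_{2^{k+1}R}(x))\le C_D\,\mass(B_{2^k R}(x))$ again by \eqref{fulldoubing}. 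Combining these bounds,
\[
\int_{A_k}\frac{\dd\mass(y)}{\mass(B_{\dist(x,y)}(x))\,\dist(x,y)^p}\le\frac{\mass(A_k)}{\mass(B_{2^k R}(x))\,2^{kp}R^p}\le\frac{C_D}{2^{kp}R^p}.
\]
Finally, summing over $k\ge 0$ and using that the integrand is nonnegative (so that the integral over the union equals the sum of the integrals over the $A_k$),
\[
\int_{\XX\setminus B_R(x)}\frac{\dd\mass(y)}{\mass(B_{\dist(x,y)}(x))\,\dist(x,y)^p}\le\frac{C_D}{R^p}\sum_{k=0}^{\infty}2^{-kp}=\frac{C_D}{(1-2^{-p})R^p},
\]
which is \eqref{lemma28eq} with $C\defeq C_D/(1-2^{-p})$, depending only on the doubling constant $C_D$ of $(\XX,\dist,\mass)$ and on $p$. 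There is no genuine obstacle here: the only points requiring a line of care are the degenerate case $\mass\equiv 0$ handled above, and the verification that the annuli $A_k$ cover $\XX\setminus B_R(x)$ without overlap for the chosen (open-ball) convention.
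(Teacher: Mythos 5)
Your proof is correct: the dyadic annulus decomposition, the doubling bound $\mass(B_{2^{k+1}R}(x))\le C_D\,\mass(B_{2^kR}(x))$, and the geometric series give exactly \eqref{lemma28eq}, and your treatment of the degenerate case $\mass(B_{r_0}(x))=0$ is a sensible extra precaution. The paper itself does not prove this lemma but quotes it from \cite{DiMarSquas}, and your argument is essentially the standard one given there, so there is nothing to add beyond noting that the constant also depends on $p$ (which is fixed throughout).
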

 
A key property of $p$-$\PI$ spaces, proved in \cite{Cheeger00}, is that  
\begin{equation}\label{eq:|Df|=lipf}
|\DIFF_p f|=\lip(f)\quad \mass\text{-a.e.}\qquad\text{ for every }f\in\LIP_{\rm bs}(\XX).
\end{equation}
\\

\noindent\textbf{pmGH topology.} \\
A fundamental concept in the blow-up argument of this note is the pointed-measured-Gromov--Hausdorff (pmGH) convergence (see e.g.\ \cite{GMS15} and the references therein).
Take a sequence \(\big((\XX_n,\dist_n,\mass_n,x_n)\big)_n\) of pointed uniformly doubling spaces, sharing the same constant $C_D$ of \eqref{fulldoubing}.
Then we say that \((\XX_n,\dist_n,\mass_n,x_n)\) converges to the pointed metric measure space \((\XX_\infty,\dist_\infty,\mass_\infty,x_\infty)\)
in the pmGH sense provided that there exist a proper metric space \((\ZZ,\dist_\ZZ)\) and isometric embeddings \(\iota_n\colon(\XX_n,\dist_n)\hookrightarrow(\ZZ,\dist_\ZZ)\) for
\(n\in\NN\cup\{\infty\}\) such that
\[
(\iota_n)_\#\mass_n\rightharpoonup(\iota_\infty)_\#\mass_\infty\qquad\text{in duality with }C_{\rm bs}(\ZZ),
\]
and \(\iota_n(x_n)\to\iota_\infty(x_\infty)\). If we have a sequence of functions $f_n:\XX_n\rightarrow\RR$, for $n\in\NN\cup\{\infty\}$, we say that $f_n$ converges to $f$ locally uniformly (with respect to the realization $(\ZZ,\dist_\ZZ)$) if for every $R,\epsilon>0$, there exist $\bar n\in\NN$ and $\delta>0$ such that 
$$
|f_n(x)-f_\infty(x')|<\epsilon\qquad\text{if $x'\in B_R(x_\infty)\subseteq\XX_\infty$, $x\in \XX_n$, where $n\ge \bar n$ and $\dist_\ZZ(\iota_n(x),\iota_\infty(x'))<\delta$.}
$$
By Gromov compactness Theorem, any family of pointed uniformly doubling spaces \(\big((\XX_n,\dist_n,\mass_n,x_n)\big)_n\), sharing the same constant $C_D$ of \eqref{fulldoubing} and such that $$\sup_n\mass_n(B_1(x))<\infty$$ is precompact with respect to the pmGH topology. 

Given a pointed metric measure space \((\XX,\dist,\mass,x)\)
and a radius \(r>0\), we define the normalized measure \(\mass_r^x\) as
\[
\mass_r^x\coloneqq\frac{\mass}{\mass(B_r(x))}.
\]
The tangent cone \({\rm Tan}_x(\XX,\dist,\mass)\)
is then defined as the family of all those pointed metric measure spaces \((\YY,\dist_\YY,\mass_\YY,y)\) such that
\((\XX,r_i^{-1}\dist,\mass_{r_i}^x,x)\to(\YY,\dist_\YY,\mass_\YY,y)\) in the pmGH sense for some sequence \((r_i)_i\) with \(r_i\searrow 0\).
Thanks to the compactness property of the pmGH convergence, we have \({\rm Tan}_x(\XX,\dist,\mass)\neq\emptyset\) for every uniformly doubling metric measure space $(\XX,\dist,\mass)$ and for every $x\in\XX$. We define $\underline\LL^n$ to be the normalized Lebesgue measure, i.e.\
\begin{equation}\label{normalized1}
    \underline\LL^n=(\LL^n)_1^0=\frac{1}{\omega_n}\LL^n,
\end{equation}
where $\omega_n\defeq\LL^n(B_1(0))$.
\\

\noindent\textbf{Heisenberg group} \\
We represent the Heisenberg group $\heis^{n}$ as the set $\mathbb{C}^n\times\RR \equiv \RR^{2n+1}$; thus its points will be described as $x=(z,t)=(\zeta+i\eta,t)=(\zeta,\eta,t)$, with $z\in\mathbb{C}^n$, $\zeta,\eta\in\RR^n$, $t\in\RR$. The group operation on $\heis^{n}$ will be defined as follows: whenever $x=(z,t)\in \heis^{n}$ and $x'=(z',t')\in \heis^{n}$,
	\begin{equation}\label{equation: group operation}
	x\,\cdot\, x'  \coloneqq \left(z+z', t+t'+ 2 \mathrm{Im}\left(\langle z,\bar{z'}\rangle\right)\right),
	\end{equation}
where $\bar{z}\in\mathbb{C}$ denotes the conjugate of $z\in \mathbb{C}$.
As a consequence, it is easy to verify that the group identity is the origin 0 and the inverse of a point is given by $(z,t)^{-1}=(-z,-t)$.
We also introduce the following family of \emph{non-isotropic dilations}, since they are fundamental in the geometry of the Heisenberg group: if $x=(z,t)\in \heis^{n}$  and $\lambda>0$,
	\begin{equation}\label{equation: dilations}
	\delta_{\lambda}(x)\coloneqq(\lambda z,\lambda^2t). 
	\end{equation}
The Heisenberg group $\heis^n$ admits the structure of a Lie group of topological dimension $2n+1$. Its Lie algebra $\mathfrak{h}_n$ of left invariant vector fields is  stratified of step $2$ and it is (linearly) generated by $(2n+1)-$vector fields
%\begin{equation}\label{equation: %left inv vector fields}
%X_j =\frac{\partial}{\partial x_j}+2y_j\frac{\partial}{\partial t},\quad 
%Y_j=\frac{\partial}{\partial y_j}-2x_j\frac{\partial}{\partial t },\quad 
%\text{for $j=1,\dots,n$;}
%\quad\text{and } Z=\frac{\partial}{ \partial t}.
%\end{equation}
$X_1,\dots,X_{n}, Y_1,\dots, Y_n, T$. %which will be called \emph{horizontal vector fields}: the group $\heis^{n}$ endowed with this Lie algebra has the structure of a Carnot group.  
We recall that an absolutely continuous curve $\gamma\colon [a,b]\to \heis^n$ is \emph{horizontal} if there exist $u_{1}, \ldots, u_{2n}\in L^{1}[a,b]$ such that for almost every $t\in [a,b]$:
\[\gamma'(t)=\sum_{i=1}^{n}\left(u_{i}(t)X_{i}(\gamma(t))+u_{i+n}(t)Y_{i}(\gamma(t))\right).\]
Define the horizontal length of such a curve $\gamma$ by $L(\gamma)=\int_{a}^{b}|u(t)|\dd t$, where $u=(u_{1}, \ldots, u_{2n})$ and $|\,\cdot\,|$ denotes the Euclidean norm on $\RR^{2n}$.
The \emph{Carnot-Carath\'eodory distance (CC distance)} between points $x, y \in \heis^n$ is defined by:
\[d_{\mathbb{H}}(x,y)=\inf \{ L(\gamma) : \gamma \colon [0,1]\to \heis^n \mbox{ horizontal joining }x\mbox{ to }y \}.\]
The Chow-Rashevskii Theorem implies that any two points of $\heis^n$ can be connected by horizontal curves \cite[Theorem 9.1.3]{BLU07}. It follows that the CC distance is indeed a distance on $\heis^n$. %The following identities hold for $x, y, z\in %\heis^n$ and $r>0$: 
%\[d_{\mathbb{H}}(zx,zy)=d_{\mathbb{H}}(x,y), %\qquad d_{\mathbb{H}}(\delta_{r}(x),\delta_{r}%(y))=rd_{\mathbb{H}}(x,y).\]
For brevity we write $\|x\|$ instead of $d_{\mathbb{H}}(x,0)$. It is well known that $d_{\mathbb{H}}$ induces the same topology as the Euclidean distance but the two are not Lipschitz equivalent.\\
The Haar measure of the $\mathbb{H}^n$ is given by the Lebesgue measure $\LL^{2n+1}$. Unless otherwise stated, when
saying that a property holds a.e.\ on $\heis^{n}$; we will always mean that it holds a.e.\ with
respect to $\LL^{2n+1}$.
As in the Euclidean case, we define $\underline{\LL}^{2n+1}$ to be the normalized Lebesgue measure, i.e.
\begin{equation}\label{normalized2}
    \underline{\LL}^{2n+1}=\frac{1}{\omega_n}\LL^{2n+1}
\end{equation}
where $\omega_n=\LL^{2n+1}(B_1(0))$ and $B_1(0)$ denotes the unit ball with respect to $d_{\mathbb{H}}$.

\section{Main part}
\subsection{Auxiliary results}
\noindent\textbf{The functionals $I_\delta$ and $J_\delta$}
	\begin{defn}\label{IandJ}Let $p\in(1,\infty)$ and let $f\in L^p(\mass)$. We define, for $\delta>0$,
		\begin{equation}
			I_\delta(f)\defeq\iint_{|f(x)-f(y)|>\delta}\frac{\delta^p}{\mass(B_{\dist(x,y)}(x))\dist(x,y)^p}\dd\mass(y)\dd\mass(x)
		\end{equation}
		and 
				\begin{equation}
			J_\delta(f)\defeq\iint_{|f(x)-f(y)|\le 1}\frac{\delta|f(x)-f(y)|^{p+\delta}}{\mass(B_{\dist(x,y)}(x))\dist(x,y)^p}\dd\mass(y)\dd\mass(x).
		\end{equation}
	\end{defn}
The following lemma is morally \cite[Equation (3.6)]{NguyenPinamontiSquassinaVecchi} and follows from the definition of $I_\delta$ together with the fact that the triangle inequality implies 
 	\begin{equation}\notag
		\chi_{\{|f(x)-f(y)|> \delta\}}\le	\chi_{\{|g(x)-g(y)|> (1-\epsilon)\delta\}}+ \chi_{\{|(f-g)(x)-(f-g)(y)|>\epsilon\delta\}}
	\end{equation}
 for every $\delta,\epsilon>0$.
 \begin{lem}\label{tricklem}
	Let $p\in(1,\infty)$ and let $f,g\in L^p(\mass)$. Then, if $\delta,\epsilon>0$,
\begin{equation}\label{trick}
		I_\delta(f)\le \frac{1}{(1-\epsilon)^p}  I_{(1-\epsilon)\delta}(g)+\frac{1}{\epsilon^p} I_{\epsilon\delta}(f-g).
\end{equation}
	
\end{lem}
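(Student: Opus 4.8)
The plan is to deduce the statement directly from the pointwise inequality between indicator functions quoted just before the lemma, and then integrate against the (nonnegative) kernel. First I would check that pointwise inequality: fix $x,y\in\XX$ with $|f(x)-f(y)|>\delta$. Writing $f=g+(f-g)$ and using the triangle inequality in $\RR$ gives
\[
\delta<|f(x)-f(y)|\le |g(x)-g(y)|+|(f-g)(x)-(f-g)(y)|.
\]
Hence it is impossible that both $|g(x)-g(y)|\le(1-\epsilon)\delta$ and $|(f-g)(x)-(f-g)(y)|\le\epsilon\delta$ hold, since their sum would then be at most $\delta$. Consequently at least one of the events $\{|g(x)-g(y)|>(1-\epsilon)\delta\}$ and $\{|(f-g)(x)-(f-g)(y)|>\epsilon\delta\}$ occurs, which is precisely the claimed inequality $\chi_{\{|f(x)-f(y)|>\delta\}}\le\chi_{\{|g(x)-g(y)|>(1-\epsilon)\delta\}}+\chi_{\{|(f-g)(x)-(f-g)(y)|>\epsilon\delta\}}$.

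Next I would multiply both sides of this inequality by the nonnegative quantity $\delta^p\big/\big(\mass(B_{\dist(x,y)}(x))\dist(x,y)^p\big)$ and integrate in $\dd\mass(y)\dd\mass(x)$ over $\XX\times\XX$ (all integrands being nonnegative and measurable, so the integrals are well-defined in $[0,\infty]$ and monotonicity applies). By Definition \ref{IandJ} the left-hand side is exactly $I_\delta(f)$, and the right-hand side becomes the sum of two double integrals, over $\{|g(x)-g(y)|>(1-\epsilon)\delta\}$ and over $\{|(f-g)(x)-(f-g)(y)|>\epsilon\delta\}$ respectively, each still carrying the factor $\delta^p$.

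Finally I would rescale the parameter in each term: since $\delta^p=(1-\epsilon)^{-p}\big((1-\epsilon)\delta\big)^p$, the first integral equals $(1-\epsilon)^{-p}I_{(1-\epsilon)\delta}(g)$; since $\delta^p=\epsilon^{-p}(\epsilon\delta)^p$, the second equals $\epsilon^{-p}I_{\epsilon\delta}(f-g)$. Adding these gives \eqref{trick}. I do not expect any genuine obstacle here: the only points to keep in mind are that one works in $[0,\infty]$ throughout (so the inequality is meaningful even when some $I_\delta$ is infinite) and that the splitting of the double integral is justified simply because the integrand is a sum of two nonnegative functions.
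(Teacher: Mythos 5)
Your proof is correct and follows essentially the same route the paper indicates: the pointwise indicator inequality from the triangle inequality, integration against the nonnegative kernel, and rescaling $\delta^p$ to absorb the factors $(1-\epsilon)^{-p}$ and $\epsilon^{-p}$. Nothing is missing; you have simply written out in full the argument the paper only sketches.
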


The following lemma morally follows from an easy computation done during the proof of \cite[Theorem 1.5]{DiMarSquas}, and we are going to use it also in the spirit of  \cite[Lemma 2.3]{CLL14}.
\begin{lem}\label{IandJlem}
Let $p\in(1,\infty)$ and let $f\in L^p(\mass)$. Then, for every $\epsilon\in(0,1)$,
\begin{equation}\label{IandJeq}
	\int_0^1\epsilon\delta^{\epsilon-1}I_\delta(f)\dd\delta=	\frac{1}{p+\epsilon}\bigg(J_\epsilon(f) +	\epsilon\iint_{|f(x)-f(y)|>  1}\frac{1}{\mass(B_{\dist(x,y)}(x))\dist(x,y)^p}\dd\mass(y)\dd\mass(x)\bigg).
\end{equation}
\end{lem}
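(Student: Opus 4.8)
The plan is to recognize that both sides of \eqref{IandJeq} are the integral of one and the same nonnegative function over $(0,1)\times\XX\times\XX$, and then to conclude by Tonelli's theorem. To fix notation I would set $h(x,y)\defeq|f(x)-f(y)|$ and $w(x,y)\defeq\big(\mass(B_{\dist(x,y)}(x))\dist(x,y)^p\big)^{-1}$, a nonnegative Borel function on $\XX\times\XX$. With this notation $I_\delta(f)=\delta^p\int_{\XX\times\XX}\chi_{\{h>\delta\}}\,w\,\dd(\mass\otimes\mass)$, so that
\[
\int_0^1\epsilon\delta^{\epsilon-1}I_\delta(f)\,\dd\delta=\epsilon\int_0^1\bigg(\int_{\XX\times\XX}\delta^{p+\epsilon-1}\chi_{\{h(x,y)>\delta\}}\,w(x,y)\,\dd(\mass\otimes\mass)(x,y)\bigg)\dd\delta.
\]

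Next I would swap the order of integration. The integrand $(\delta,x,y)\mapsto\delta^{p+\epsilon-1}\chi_{\{h(x,y)>\delta\}}w(x,y)$ is nonnegative and measurable, and the three measures involved are $\sigma$-finite ($\LL^1\mres(0,1)$ is finite, and $\mass$ is $\sigma$-finite, being a Borel measure finite on balls on a separable space), so Tonelli's theorem applies and the right-hand side above equals $\epsilon\int_{\XX\times\XX}w(x,y)\big(\int_0^1\delta^{p+\epsilon-1}\chi_{\{\delta<h(x,y)\}}\,\dd\delta\big)\dd(\mass\otimes\mass)(x,y)$. Since $f\in L^p(\mass)$ is $\mass$-a.e.\ finite, $h(x,y)<\infty$ for $\mass\otimes\mass$-a.e.\ $(x,y)$, and for such $(x,y)$ the inner integral is the elementary quantity
\[
\int_0^1\delta^{p+\epsilon-1}\chi_{\{\delta<h(x,y)\}}\,\dd\delta=\int_0^{\min\{h(x,y),1\}}\delta^{p+\epsilon-1}\,\dd\delta=\frac{1}{p+\epsilon}\,\min\{h(x,y),1\}^{p+\epsilon}.
\]

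Finally I would split $\XX\times\XX$ into $\{h\le1\}$ and $\{h>1\}$. On the first set $\min\{h,1\}^{p+\epsilon}=h^{p+\epsilon}$, and $\epsilon\int_{\{h\le1\}}h^{p+\epsilon}w\,\dd(\mass\otimes\mass)$ is by definition exactly $J_\epsilon(f)$; on the second set $\min\{h,1\}^{p+\epsilon}=1$, and $\epsilon\int_{\{h>1\}}w\,\dd(\mass\otimes\mass)$ is exactly $\epsilon$ times the second summand on the right-hand side of \eqref{IandJeq}. Collecting the common prefactor $\tfrac{1}{p+\epsilon}$ then gives the identity. I do not expect any genuine obstacle: the argument is Cavalieri's principle plus the elementary identity $\int_0^1\delta^{p+\epsilon-1}\chi_{\{\delta<a\}}\,\dd\delta=\tfrac{1}{p+\epsilon}\min\{a,1\}^{p+\epsilon}$, and the only points worth a line are the $\sigma$-finiteness/measurability needed to invoke Tonelli and the observation that $h<\infty$ a.e., which is what forces the restriction $\delta\in(0,1)$ to manifest itself solely as the truncation of $h$ at height $1$ — i.e.\ precisely as the split between $J_\epsilon(f)$ and the $\{h>1\}$ tail term.
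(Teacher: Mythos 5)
Your argument is correct and is essentially the paper's own proof: both compute the elementary identity $\int_0^1\epsilon\delta^{p+\epsilon-1}\chi_{\{|f(x)-f(y)|>\delta\}}\,\dd\delta=\tfrac{\epsilon}{p+\epsilon}\,(|f(x)-f(y)|\wedge 1)^{p+\epsilon}$ pointwise and then exchange the order of integration via Fubini--Tonelli, splitting into the sets $\{|f(x)-f(y)|\le 1\}$ and $\{|f(x)-f(y)|>1\}$. Your extra remarks on measurability, $\sigma$-finiteness and the a.e.\ finiteness of $f$ are fine but not a departure from the paper's route.
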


\begin{proof}
Notice first that,	
\begin{equation}
\begin{split}
	&\int_0^1 \epsilon\delta^{p+\epsilon-1}\chi_{\{|f(x)-f(y)|>\delta\}}(x,y)\dd\delta=\frac{\epsilon}{p+\epsilon}(|f(x)-f(y)|\wedge 1)^{p+\epsilon}\\
	&\qquad=\frac{1}{p+\epsilon}\Big(\epsilon\chi_{\{|f(x)-f(y)|\le 1\}}(x,y) |f(x)-f(y)|^{p+\epsilon} +\epsilon \chi_{\{|f(x)-f(y)|>  1\}}(x,y)\Big),
\end{split}
\end{equation}
then \eqref{IandJeq}  follows from Fubini's Theorem.
\end{proof}
\medskip\noindent\textbf{The blow-up argument}
\begin{lem}\label{blowup}
	Let $(\XX,\dist,\mass)$ be a $\PI$ space satisfying Assumption \ref{Ass1}.
Let $f\in\LIP(\XX)$. Then, for every $p\in(1,\infty)$,
\begin{equation}\label{blowupeq}
	\lim_{\delta\searrow 0}\int_\XX \chi_{\{|f(y)-f(x)|>\delta\}}(x,y)\frac{\delta^p}{\mass(B_{\dist(x,y)}(x))\dist(x,y)^p}\dd\mass(y)=\frac{C_{n,p}}{p \omega_n} \lip(f)^p(x)\qquad\text{for $\mass$-a.e.\ $x$},
\end{equation}
	where $C_{n,p}$ is as in  Definition \ref{defnormconst}.
\end{lem}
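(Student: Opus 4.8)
The plan is to prove the pointwise identity \eqref{blowupeq} at every point $x$ where the tangent is unique and Euclidean (i.e.\ \eqref{tangA} and \eqref{densityA} hold), where the density $\Theta_n(\mass,x)$ exists in $(0,\infty)$, and where $f$ is differentiable in the sense that its blow-ups converge to a linear map with $|\DIFF f|(x)=\lip(f)(x)$; by the standing assumptions and the Cheeger theory these conditions hold $\mass$-a.e. Fix such an $x$ and abbreviate $\ell\defeq\lip(f)(x)$. If $\ell=0$ the left-hand side is easily seen to vanish (the set $\{|f(y)-f(x)|>\delta\}$ near $x$ has small measure once $\delta$ is small, and Lemma~\ref{lemma28} controls the far-away contribution), so assume $\ell>0$.

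The key step is a change of variables. For $\delta>0$ set $r=r(\delta)$ by asking, say, $\delta=\ell r$ (or more invariantly: rescale by the natural length $\delta/\ell$), and consider the rescaled spaces $(\XX,\dist/r,\mass_r^x,x)$. The function $g_r(y)\defeq \big(f(y)-f(x)\big)/r$ is $\LIP$ with the same pointwise Lipschitz constant, and as $r\searrow 0$ we have, along any subsequence realizing the tangent, $(\XX,\dist/r,\mass_r^x,x)\to(\RR^n,\dist_e,\underline\LL^n,0)$ in the pmGH sense and $g_r\to g_\infty$ locally uniformly, where $g_\infty(v)=L\cdot v$ is a linear function with $|L|=\ell$ (this is the characterization of blow-ups of Lipschitz functions on $\PI$ spaces as generalized linear functions, from \cite{Cheeger00}, invoked in Remark~\ref{verodoub}). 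Writing the integral in \eqref{blowupeq} in the rescaled variables, the density factor $\mass(B_{\dist(x,y)}(x))$ becomes $\mass(B_r(x))\,\mass_r^x(B_{\dist(x,y)/r}(x))$, and using \eqref{densityA} together with $\mass_r^x(B_s(x))\to\underline\LL^n(B_s(0))=s^n$ one sees that, up to an error going to $0$, the integrand converges to the corresponding Euclidean integrand against $\underline\LL^n$. Thus the limit equals
\[
\int_{\RR^n}\chi_{\{|L\cdot v|>1\}}\,\frac{1}{\omega_n\,|v|^{n+p}}\,\dd\LL^n(v),
\]
after accounting for the normalization $\underline\LL^n=\omega_n^{-1}\LL^n$ and the scaling $\delta^p/r^p=\ell^p$; passing to polar coordinates $v=\rho\theta$, the $\rho$-integral $\int_0^\infty \chi_{\{\rho|L\cdot\theta|>1\}}\rho^{-1-p}\,\dd\rho=\tfrac1p|L\cdot\theta|^p$ yields exactly $\tfrac{1}{p\omega_n}\int_{S^{n-1}}|L\cdot\theta|^p\,\dd\HH^{n-1}(\theta)=\tfrac{C_{n,p}}{p\omega_n}\ell^p$, which is the right-hand side.

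The main obstacle is making the passage to the limit under the integral sign rigorous, since pmGH convergence only gives convergence in a weak/Hausdorff sense and the integrand is singular at $y=x$ (it behaves like $\dist(x,y)^{-n-p}$) and the domain is noncompact. I would handle this in the standard three-region way: (i) an inner region $\dist(x,y)\le \epsilon r$, where the measure of the superlevel set is negligible because $g_r$ is uniformly Lipschitz and the doubling bounds control $\int_{B_{\epsilon r}(x)}\dist(x,y)^{-n}\dd\mass_r^x$; more carefully, on $\{|g_r(y)|>1\}$ one needs $\dist(x,y)/r \gtrsim 1/\mathrm{Lip}$, so this region is in fact empty for small enough radius once $\ell<\infty$ — this already kills the singularity; (ii) an outer region $\dist(x,y)\ge R r$, controlled uniformly in $\delta$ by Lemma~\ref{lemma28} applied on the rescaled space (using that the doubling constant is uniform), contributing $O(R^{-p})$; (iii) the intermediate annulus $\epsilon r\le \dist(x,y)\le Rr$, a "compact" region where pmGH convergence of the spaces, local uniform convergence $g_r\to g_\infty$, and weak convergence of the measures let one pass to the Euclidean limit, provided the boundary set $\{|L\cdot v|=1\}$ has $\LL^n$-measure zero (true since $L\neq 0$) so that $\chi_{\{|g_r|>1\}}\to\chi_{\{|L\cdot v|>1\}}$ in $L^1_{\mathrm{loc}}$ of the limit. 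Letting first $\delta\searrow0$ (i.e.\ $r\searrow0$) along the tangent subsequence, then $\epsilon\searrow0$ and $R\nearrow\infty$, and finally using uniqueness of the tangent to upgrade from subsequential to full convergence, gives \eqref{blowupeq}. A minor technical point worth isolating is the interchange needed to say $\lim_\delta$ rather than $\liminf/\limsup$: uniqueness of the tangent is exactly what removes the subsequence, and the equiboundedness from Lemma~\ref{lemma28} plus the Lipschitz bound make the family $\{I_\delta^{\mathrm{local}}(x)\}$ uniformly bounded, so the argument is self-contained modulo \eqref{eq:|Df|=lipf} and the Cheeger blow-up theorem.
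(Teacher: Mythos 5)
Your proposal follows essentially the same blow-up strategy as the paper's own proof: rescaling at a generic point, Cheeger's characterization of blow-ups of Lipschitz functions as linear maps on the (unique, Euclidean) tangent, the three-region splitting where the Lipschitz bound empties the inner region, Lemma \ref{lemma28} controls the far region with an $O(R^{-p})$ error, the indicator passes to the limit on the annulus because $\{|L_x|=1\}$ is $\LL^n$-negligible (implemented in the paper via the cutoffs $\phi_\epsilon,\psi_\epsilon$), and the final polar-coordinate computation yields $\frac{C_{n,p}}{p\omega_n}\lip(f)^p(x)$. The differences (rescaling by $\delta/\lip(f)(x)$ rather than normalizing $f$ to be $1$-Lipschitz, and a slightly informal but fixable treatment of the case $\lip(f)(x)=0$, which the paper's computation covers directly) are cosmetic, so the argument is correct and matches the paper's.
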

\begin{proof}
We prove that \eqref{blowupeq} holds at any fixed point $x$ satisfying the conclusions of Assumption \ref{Ass1} and the conclusion of \cite[Theorem 10.2]{Cheeger00}. Up to rescaling, we can assume that $f$ is $1$-Lipschitz.
Fix any sequence $\delta_k\searrow 0$, it is enough to show that we can extract a (non relabelled) subsequence such that the limit in \eqref{blowupeq} holds for this subsequence. 

Thanks to Gromov compactness Theorem, Arzelà--Ascoli Theorem and the choice of $x$ (in particular, \eqref{tangA}), we can assume (up to taking a non relabelled subsequence) that 
$$
(\XX,\dist_{\delta_k},\mass_{\delta_k}^x,x)\defeq(\XX,\delta_k^{-1}\dist,\mass/\mass(B_{\delta_k}(x)),x)\rightarrow (\RR^n,\dist_e,\underline{\LL}^n,0)\qquad\text{in the $\rm pmGH$ topology}
$$ 
and that $f_k\defeq \delta_k^{-1}(f-f(x))$ converge locally uniformly to a linear function $L_x:\RR^n\rightarrow\RR$,  where, for some $\zeta\in S^{n-1}$, $$L_x(y)=\lip(f)(x)(y\,\cdot\,\zeta)\qquad\text{for every }y\in\RR^n$$
 (see \cite[Proposition 3.3]{Gorny} or \cite[Lemma 3.3]{BPP}), with respect to a realization $(\ZZ,\dist_\ZZ)$ and isometric embeddings $(\iota_k)_k$.

	We let $C>0$ denote a constant that depends only $(\XX,\dist,\mass)$ and may vary during the proof. If $R>0$, by \eqref{lemma28eq}, for every $k$,
\begin{equation}\label{aa}
\begin{split}
&\int_{\XX\setminus B_{\delta_k R}(x)} \chi_{\{|f(y)-f(x)|>\delta_k\}}(x,y)\frac{\delta_k^p}{\mass(B_{\dist(x,y)}(x))\dist(x,y)^p}\dd\mass(y)\\&\qquad\le \delta_k^p\int_{\XX\setminus B_{\delta_k R}(x)} \frac{1}{\mass(B_{\dist(x,y)}(x))\dist(x,y)^p}\dd\mass(y)\\&\qquad\le  \delta_k^p  \frac{C}{(\delta_k R)^p}=\frac{C}{R^p}.
\end{split}
\end{equation}

Now, for $\epsilon\in (0,1)$ we define $\phi_\epsilon\in\LIP(\RR)$ by
$$
\phi_\epsilon(x)\defeq
\begin{cases}
	0\qquad&\text{if }x\le 1-\epsilon,\\
	x/\epsilon+(\epsilon-1)/\epsilon\qquad&\text{if }1-\epsilon\le x\le 1,\\
	1\qquad&\text{if }1\le x;
\end{cases}
$$
and we define
$\psi_\epsilon\in\LIP(\RR)$ by
$$
\psi_\epsilon(x)\defeq
\begin{cases}
	0\qquad&\text{if }x\le 1-2\epsilon,\\
	x/\epsilon+(2\epsilon-1)/\epsilon\qquad&\text{if }1-2\epsilon\le x\le 1-\epsilon,\\
	1\qquad&\text{if }1-\epsilon\le x\le 1,\\
	-x/\epsilon+(\epsilon+1)/\epsilon\qquad&\text{if }1 \le x\le 1+\epsilon,\\
	0\qquad&\text{if }1+\epsilon\le x.
\end{cases}
$$
Notice that for every $\epsilon$, $\phi_\epsilon\circ|f_k|$ converge locally uniformly to $\phi_\epsilon\circ |L_x|$ and similarly for $\psi_\epsilon$ in place of $\phi_\epsilon$, as $f_k$ converge locally uniformly to $L_x$.

Now we compute, being $f$ $1$-Lipschitz,
\begin{equation}\label{omkn}
\begin{split}
		&\int_{B_{\delta_k R}(x)} \chi_{\{|f(y)-f(x)|>\delta_k\}}(x,y)\frac{\delta_k^p}{\mass(B_{\dist(x,y)}(x))\dist(x,y)^p}\dd\mass(y)
	\\&\qquad =\int_{B_{\delta_k R}(x)\setminus B_{\delta_k }(x)} \chi_{\{|f(y)-f(x)|>\delta_k\}}(x,y)\frac{\delta_k^n}{\dist(x,y)^n\mass(B_{\delta_k}(x))}\frac{\delta_k^p}{\dist(x,y)^p}\dd\mass(y)+\omega^{\rm I}_{k,R},
\end{split}
\end{equation}
where 
$$
\omega^{\rm I}_{k,R}\defeq \int_{B_{\delta_k R}(x)\setminus B_{\delta_k }(x)} \chi_{\{|f(y)-f(x)|>\delta_k\}}(x,y)\bigg(\frac{1}{\mass(B_{\dist(x,y)}(x))}-\frac{\delta_k^n}{\dist(x,y)^n\mass(B_{\delta_k}(x))}\bigg)\frac{\delta_k^p}{\dist(x,y)^p}\dd\mass(y).
$$
Then, recalling \eqref{lemma28eq}  and then  \eqref{densityA},
\begin{equation}\label{est1}
\begin{split}
		|\omega^{\rm I}_{k,R}|&\le \int_{B_{\delta_k R}(x)\setminus B_{\delta_k }(x)} \bigg|1-\frac{\delta_k^n\mass(B_{\dist(x,y)}(x))}{\dist(x,y)^n\mass(B_{\delta_k}(x))}\bigg|\frac{\delta_k^p}{\mass(B_{\dist(x,y)}(x))\dist(x,y)^p}\dd\mass(y)
	\\&\le \sup_{y\in B_{\delta_kR}(x)} \bigg|1-\frac{\delta_k^n\mass(B_{\dist(x,y)}(x))}{\dist(x,y)^n\mass(B_{\delta_k}(x))}\bigg|\int_{\XX\setminus B_{\delta_k }(x)}\frac{\delta_k^p}{\mass(B_{\dist(x,y)}(x))\dist(x,y)^p}\dd\mass(y)\rightarrow 0.
\end{split}
\end{equation}

Therefore we reduce ourselves to compute the limit as $k\rightarrow\infty$ of
\begin{equation}\label{omkn1}
\begin{split}
	&\int_{B_{\delta_k R}(x)\setminus B_{\delta_k}(x)}\chi_{\{|f(y)-f(x)|>\delta_k\}}(x,y)\frac{\delta_k^{n+p}}{\dist(x,y)^{n+p}}\frac{1}{\mass(B_{\delta_k}(x))}\dd\mass(y)\\&\qquad=\int_{B^k_R(x)\setminus B_1^k(x)} \chi_{\{|f_k|>1\}}(y)\frac{1}{\dist_{\delta_k}(x,y)^{n+p}}\dd\mass_{\delta_k}^x(y)
\\&\qquad=\int_{B^k_R(x)\setminus B_1^k(x)} \phi_\epsilon(|f_k(y)|)\frac{1}{\dist_{\delta_k}(x,y)^{n+p}}\dd\mass_{\delta_k}^x(y)+ \omega^{\rm II}_{k,R,\epsilon},
\end{split}
\end{equation}
where 
$$
\omega^{\rm II}_{k,R,\epsilon}\defeq\int_{B^k_R(x)\setminus B_1^k(x)} \big(\chi_{\{|f_k|>1\}}(y)-\phi_\epsilon(|f_k(y)|)\big)\frac{1}{\dist_{\delta_k}(x,y)^{n+p}}\dd\mass_{\delta_k}^x(y).
$$
We can estimate
\begin{equation}
\begin{split}\label{est2}
	|\omega^{\rm II}_{k,R,\epsilon}|&\le\int_{B^k_R(x)\setminus B_1^k(x)} \big|\chi_{\{|f_k|>1\}}(y)-\phi_\epsilon(|f_k(y)|)\big|\frac{1}{\dist_{\delta_k}(x,y)^{n+p}}\dd\mass_{\delta_k}^x(y)
\\&\le\int_{B^k_R(x)\setminus B_1^k(x)} \psi_\epsilon(|f_k(y)|)\frac{1}{\dist_{\delta_k}(x,y)^{n+p}}\dd\mass_{\delta_k}^x(y).
\end{split}
\end{equation}

Hence we know from \eqref{omkn} and \eqref{omkn1} that
\begin{equation}\label{csdsca}
\begin{split}
		&\limsup_k \bigg|\int_{B_{\delta_k R}(x)} \chi_{\{|f(y)-f(x)|>\delta_k\}}(x,y)\frac{\delta_k^p}{\mass(B_{\dist(x,y)}(x))\dist(x,y)^p}\dd\mass(y)\\&\qquad\qquad\qquad\qquad\qquad-\int_{B^k_R(x)\setminus B_1^k(x)} \phi_\epsilon(|f_k(y)|)\frac{1}{\dist_{\delta_k}(x,y)^{n+p}}\dd\mass_{\delta_k}^x(y)\bigg|\\&\qquad\le\limsup_k\big(|\omega^{\rm I}_{k,R}|+|\omega^{\rm II}_{k,R,\epsilon}|\big).
\end{split}
\end{equation}

We compute, by local uniform convergence,
\begin{align}\label{conv1}
	\lim_k\int_{B^k_R(x)\setminus B_1^k(x)} \phi_\epsilon(|f_k(y)|)\frac{1}{\dist_{\delta_k}(x,y)^{n+p}}\dd\mass_{\delta_k}^x(y)=\int_{B_R(0)\setminus B_1(0)} \phi_\epsilon(|L_x(y)|)\frac{1}{|y|^{n+p}}\dd\underline{\LL}^n(y)
\end{align}
and, similarly,
\begin{align}\label{conv2}
	\lim_k\int_{B^k_R(x)\setminus B_1^k(x)} \psi_\epsilon(|f_k(y)|)\frac{1}{\dist_{\delta_k}(x,y)^{n+p}}\dd\mass_{\delta_k}^x(y)=\int_{B_R(0)\setminus B_1(0)} \psi_\epsilon(|L_x(y)|)\frac{1}{|y|^{n+p}}\dd\underline{\LL}^n(y).
\end{align}
We briefly justify this claim. We can put ourselves in a situation in which we have sequence of functions $g_k$ that locally uniformly to $g_\infty$, where $g_\infty$ is Lipschitz. We then extend $g_\infty$ to a Lipschitz function on $(\ZZ,\dist_\ZZ)$ that we call $g'_\infty$, in the sense that $g_\infty'\circ\iota_\infty=g_\infty$. Then, if $\varphi\in C_{\mathrm{bs}}(\ZZ)$, 
\begin{align}
    \int_{\XX} \varphi g_k\dd\mass^x_{\delta_k}=    \int_{\XX} \varphi g_\infty'\dd\mass^x_{\delta_k}+\int_{\XX} \varphi (g_k-g_\infty')\dd\mass^x_{\delta_k}.
\end{align}
Therefore, the claim follows by the weak convergence of $\mass^x_{\delta_k}$ to $\underline{\LL}^n$ provided we show that, 
$$
\int_{B_{R}^k(x)} |g_k-g_\infty'|\dd\mass_{\delta_k}^x\rightarrow 0.
$$
By local uniform convergence of $g_k$ to $g_\infty$ and the Lipschitz property of $g_\infty'$, we have that $|g_k-g_\infty'|$ locally uniformly converges to $0$, hence it is enough to show that for every $\tau>0$, there exists $\bar k\in\NN$ such that for every $k\ge \bar k$ and $y\in B_{\delta_k (R+1)}(x)$, there exists $y'\in B_{R+2}(0)$ with $\dist_\ZZ(\iota_k(y),\iota_\infty(y'))<\tau$. This is easily proved by contradiction, relying on standard arguments.

Therefore \eqref{csdsca} reads, taking into account also \eqref{est1} and \eqref{est2},
\begin{equation}
	\begin{split}
		&\limsup_k \bigg|\int_{B_{\delta_k R}(x)} \chi_{\{|f(y)-f(x)|>\delta_k\}}(x,y)\frac{\delta_k^p}{\mass(B_{\dist(x,y)}(x))\dist(x,y)^p}\dd\mass(y)\\&\qquad\qquad\qquad\qquad\qquad\qquad-\int_{B_R(0)\setminus B_1(0)} \phi_\epsilon(|L_x(y)|)\frac{1}{|y|^{n+p}}\dd\underline{\LL}^n(y)
		\bigg|
		\\&\qquad\le\int_{B_R(0)\setminus B_1(0)} \psi_\epsilon(|L_x(y)|)\frac{1}{|y|^{n+p}}\dd\underline{\LL}^n(y).
	\end{split}
\end{equation}
We can let now $\epsilon\searrow 0$ to infer that
\begin{equation}
	\begin{split}
		&\limsup_k \bigg|\int_{B_{\delta_k R}(x)} \chi_{\{|f(y)-f(x)|>\delta_k\}}(x,y)\frac{\delta_k^p}{\mass(B_{\dist(x,y)}(x))\dist(x,y)^p}\dd\mass(y)\\&\qquad\qquad\qquad\qquad\qquad\qquad-\int_{B_R(0)\setminus B_1(0)} \chi_{\{|L_x|\ge 1\}}(y)\frac{1}{|y|^{n+p}}\dd\underline{\LL}^n(y)
		\bigg|
		\\&\qquad\le\int_{B_R(0)\setminus B_1(0)} \chi_{\{|L_x|= 1\}}(y)\frac{1}{|y|^{n+p}}\dd\underline{\LL}^n(y)=0,
	\end{split}
\end{equation}
being the set $\{|L_x|=1\}$ either empty of the union of two hyperplanes.

All in all, recalling also \eqref{aa}, we infer that
\begin{equation}\label{mkcds}
\begin{split}
		&\lim_k \int_\XX \chi_{\{|f(y)-f(x)|>\delta_k\}}(x,y)\frac{\delta_k^p}{\mass(B_{\dist(x,y)}(x))\dist(x,y)^p}\dd\mass(y)\\&\qquad=\int \chi_{\{|L_x|\ge 1\}}(y)\frac{1}{|y|^{n+p}}\dd\underline{\LL}^n(y)=\frac{1}{\omega_n}\int_{\{y:\lip(f)(x)|y\,\cdot \,\zeta|\geq 1\}}\frac{1}{|y|^{n+p}}\dd{\LL}^n(y).
\end{split}
\end{equation}
Now, if $\lip(f)(x)\ne 0$, we perform the change of variables $z=y\lip(f)(x)$ and we see that 
\begin{equation}
\begin{split}
	&\int_{\{y:\lip(f)(x)|y\,\cdot\,\zeta|\ge 1\}}\frac{1}{|y|^{n+p}}\dd{\LL}^n(y)=\lip(f)^p(x)\int_{\{z:|z\,\cdot\,\zeta|\ge 1\}}\frac{1}{|z|^{n+p}}\dd\LL^n(z)\\&\qquad=\lip(f)^p(x)\int_{S^{n-1}}\int_0^\infty \frac{\rho^{n-1}}{\rho^{n+p}}\chi_{\{\rho|\hat z\,\cdot\,\zeta|\ge 1\}}\dd \rho\dd\HH^{n-1}(\hat z)
\\&\qquad=  \lip(f)^p(x)\int_{S^{n-1}}\int_{1/|\hat z\,\cdot\,\zeta|}^\infty \rho^{-p-1}\dd \rho\dd\HH^{n-1}(\hat z)
\\&\qquad=  \lip(f)^p(x)\int_{S^{n-1}}\frac{|\hat z\,\cdot\,\zeta|^p}{p}\dd\HH^{n-1}(\hat z),
\end{split}
\end{equation}
where we used polar coordinates.  Hence, recalling the definition of $C_{n,p}$ in  Definition \ref{defnormconst},
\begin{equation}\label{csdacsa}
	\int_{\{y:\lip(f)(x)|y\,\cdot\,\zeta|\ge 1\}}\frac{1}{|y|^{n+p}}\dd{\LL}^n(y)= \lip(f)^p(x)\frac{C_{n,p}}{p}.
\end{equation}
Notice that \eqref{csdacsa} continues to holds even if $\lip(f)(x)=0$.
Then, \eqref{blowupeq} follows from \eqref{mkcds} and \eqref{csdacsa}.
\end{proof}
\begin{lem}\label{blowuph}
	Let $(\XX,\dist,\mass)$ be a $\PI$ space satisfying Assumption \ref{Ass2}.
Let $f\in\LIP(\XX)$. Then, for every $p\in(1,\infty)$,
\begin{equation}\label{blowupeq2}
	\lim_{\delta\searrow 0}\int_\XX \chi_{\{|f(y)-f(x)|>\delta\}}(x,y)\frac{\delta^p}{\mass(B_{\dist(x,y)}(x))\dist(x,y)^p}\dd\mass(y)=\frac{ C_{n,p}}{p \omega_n} \lip(f)^p(x)\qquad\text{for $\mass$-a.e.\ $x$},
\end{equation}
	where $C_{n,p}$ is as in Definition \ref{defnormconst}.
\end{lem}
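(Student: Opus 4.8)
The plan is to mirror the proof of Lemma \ref{blowup} essentially verbatim, replacing the Euclidean tangent $(\RR^n,\dist_e,\underline{\LL}^n,0)$ with the Heisenberg tangent $(\heis^n,\dist_\heis,\underline{\LL}^{2n+1},0)$ and the exponent $n$ in the volume-ratio normalization with the homogeneous dimension $2n+2$. So I would fix a point $x$ at which the conclusions of Assumption \ref{Ass2} hold together with the conclusion of \cite[Theorem 10.2]{Cheeger00} (the characterization of blow-ups of Lipschitz functions as generalized linear functions), assume $f$ is $1$-Lipschitz up to rescaling, fix an arbitrary sequence $\delta_k\searrow 0$, and pass to a non-relabelled subsequence along which $(\XX,\delta_k^{-1}\dist,\mass/\mass(B_{\delta_k}(x)),x)\to(\heis^n,\dist_\heis,\underline{\LL}^{2n+1},0)$ in the pmGH sense and $f_k\defeq\delta_k^{-1}(f-f(x))$ converges locally uniformly (in the common realization $(\ZZ,\dist_\ZZ)$) to a limit function $L_x\colon\heis^n\to\RR$. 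The key structural input here is that, by Cheeger's theorem, $L_x$ is a generalized linear function on $\heis^n$, which means $L_x(y)=\lip(f)(x)\,(\zeta\cdot y)$ for some horizontal $\zeta$ with $\|\zeta\|_\heis=1$ (when $\lip(f)(x)\ne 0$); concretely, writing $y=(w,s)$, the function depends only on the horizontal layer, $L_x(w,s)=\lip(f)(x)\,\langle a,w\rangle$ for some $a\in\RR^{2n}$ whose appropriate norm equals $1$.

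From there the argument is identical in form. First, Lemma \ref{lemma28} (applied to $(\XX,\dist,\mass)$, which is uniformly doubling) gives the tail bound $\int_{\XX\setminus B_{\delta_k R}(x)}\chi_{\{|f(y)-f(x)|>\delta_k\}}\frac{\delta_k^p}{\mass(B_{\dist(x,y)}(x))\dist(x,y)^p}\dd\mass(y)\le C/R^p$, exactly as in \eqref{aa}. On the annulus $B_{\delta_k R}(x)\setminus B_{\delta_k}(x)$ I would again split off the error term $\omega^{\mathrm I}_{k,R}$ coming from replacing $\mass(B_{\dist(x,y)}(x))$ by $\delta_k^{-(2n+2)}\dist(x,y)^{2n+2}\mass(B_{\delta_k}(x))$; this error tends to $0$ because of the density hypothesis \eqref{density2} (which forces the ratio $\frac{\delta_k^{2n+2}\mass(B_{\dist(x,y)}(x))}{\dist(x,y)^{2n+2}\mass(B_{\delta_k}(x))}\to 1$ uniformly for $y\in B_{\delta_k R}(x)$) together with the Lemma \ref{lemma28} bound on $\int_{\XX\setminus B_{\delta_k}(x)}\frac{\delta_k^p}{\mass(B_{\dist(x,y)}(x))\dist(x,y)^p}\dd\mass(y)$. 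Then I rescale, introduce the Lipschitz cutoffs $\phi_\epsilon,\psi_\epsilon$ of Lemma \ref{blowup} to replace the characteristic function $\chi_{\{|f_k|>1\}}$ by the continuous $\phi_\epsilon(|f_k|)$ with error controlled by $\psi_\epsilon(|f_k|)$, and use the weak convergence $\mass^x_{\delta_k}\rightharpoonup\underline{\LL}^{2n+1}$ together with local uniform convergence $f_k\to L_x$ — exactly the abstract convergence lemma spelled out in the proof of Lemma \ref{blowup}, which only uses doubling and is insensitive to the geometry of the limit — to pass to the limit and obtain $\int_{B_R(0)\setminus B_1(0)}\phi_\epsilon(|L_x|)\frac{1}{\|y\|^{p}}\frac{1}{\mass(B_{\dist(x,y)}(x))/\mass(B_{\delta_k}(x))\cdot(\dots)}$; more precisely, after the $\omega^{\mathrm I}$ correction the integrand becomes $\dist_{\delta_k}(x,y)^{-(2n+2)}\dd\mass^x_{\delta_k}(y)$, converging to $\|y\|_\heis^{-(2n+2)}\dd\underline{\LL}^{2n+1}(y)$. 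Letting $\epsilon\searrow 0$ and then $R\to\infty$ yields
\begin{equation}\label{heis-limit}
\lim_k\int_\XX\chi_{\{|f(y)-f(x)|>\delta_k\}}(x,y)\frac{\delta_k^p}{\mass(B_{\dist(x,y)}(x))\dist(x,y)^p}\dd\mass(y)=\frac{1}{\omega_n}\int_{\{y\in\heis^n:\,|L_x(y)|\ge 1\}}\frac{1}{\|y\|_\heis^{2n+2}}\dd\LL^{2n+1}(y),
\end{equation}
the boundary set $\{|L_x|=1\}$ being $\underline{\LL}^{2n+1}$-negligible.

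The only genuinely Heisenberg-specific point — and the one I expect to be the main obstacle, or at least the part that needs the \say{suitable Radon measure $\sigma$} alluded to in Definition \ref{defnormconst} — is the final scaling computation of the right-hand side of \eqref{heis-limit}. When $\lip(f)(x)\ne 0$, the dilations $\delta_\lambda$ of \eqref{equation: dilations} replace the Euclidean scaling: $\|\delta_\lambda y\|_\heis=\lambda\|y\|_\heis$, $\LL^{2n+1}(\delta_\lambda E)=\lambda^{2n+2}\LL^{2n+1}(E)$, and $L_x(\delta_\lambda y)=\lambda\,L_x(y)$ since $L_x$ is horizontal-linear and hence $1$-homogeneous under $\delta_\lambda$. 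Substituting $z=\delta_{\lip(f)(x)}y$ therefore pulls out exactly a factor $\lip(f)^p(x)$, reducing matters to $\int_{\{|z\cdot\zeta|\ge 1\}}\|z\|_\heis^{-(2n+2)}\dd\LL^{2n+1}(z)$ for a fixed horizontal unit $\zeta$. Then one uses the polar-type (Pansu-type) decomposition of $\LL^{2n+1}$ adapted to the gauge $\|\cdot\|_\heis$: there is a Radon measure $\sigma$ on the unit sphere $\{\|v\|_\heis=1\}$ such that $\int_{\heis^n}g\,\dd\LL^{2n+1}=\int_{\{\|v\|=1\}}\int_0^\infty g(\delta_\rho v)\rho^{2n+1}\dd\rho\,\dd\sigma(v)$, whence $\int_{\{|z\cdot\zeta|\ge1\}}\|z\|^{-(2n+2)}\dd\LL^{2n+1}(z)=\int_{\{\|v\|=1\}}\bigl(\int_{1/|\zeta\cdot v|}^\infty\rho^{-p-1}\dd\rho\bigr)\dd\sigma(v)=\frac{1}{p}\int_{\{\|v\|=1\}}|\zeta\cdot v|^p\dd\sigma(v)=\frac{C_{n,p}}{p}$, precisely the second case of Definition \ref{defnormconst}, and this identity persists (trivially) when $\lip(f)(x)=0$. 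Combining with \eqref{heis-limit} gives \eqref{blowupeq2}. One should double-check that $\sigma$ is finite and that $\int_{\{\|v\|=1\}}|\zeta\cdot v|^p\dd\sigma(v)$ is independent of the choice of horizontal unit $\zeta$ — this follows from the invariance of the gauge and of $\LL^{2n+1}$ under the rotations of the horizontal layer, which act transitively on horizontal unit vectors — but these are exactly the ingredients one records when first introducing $\sigma$, and nothing else in the proof is sensitive to the switch from $\RR^n$ to $\heis^n$.
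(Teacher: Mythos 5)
Your proposal follows essentially the same route as the paper's proof: blow up along a subsequence to the Heisenberg tangent with $f_k\to L_x$ a horizontal linear function, reuse the tail bound of Lemma \ref{lemma28}, the density correction $\omega^{\mathrm I}$ from \eqref{density2}, the cutoffs $\phi_\epsilon,\psi_\epsilon$ with weak convergence of $\mass^x_{\delta_k}$, and then the anisotropic dilations together with the polar decomposition $\dd\LL^{2n+1}=\rho^{2n+1}\dd\rho\,\dd\sigma$ (Folland--Stein) to identify the limit as $\frac{C_{n,p}}{p\omega_n}\lip(f)^p(x)$. The only slip is notational: the kernel exponent in your displayed limit and in the reduced integral should be $\|y\|^{-(2n+2+p)}$ rather than $\|y\|^{-(2n+2)}$, as your own subsequent computation with $\rho^{-p-1}$ confirms.
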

\begin{proof}
The proof follows verbatim the one of Lemma \ref{blowup} up to some technical details that we clarify below.
As in Lemma \ref{blowup}, up to rescaling, we can assume that $f$ is $1$-Lipschitz.
Fix any sequence $\delta_k\searrow 0$, it is enough to show that we can extract a (non relabelled) subsequence such that the limit in \eqref{blowupeq} holds for this subsequence. 

Thanks to Gromov compactness Theorem, Arzelà--Ascoli Theorem and the choice of $x$ (in particular, \eqref{cdsasacds}), we can assume (up to taking a non relabelled subsequence) that 
$$
(\XX,\dist_{\delta_k},\mass_{\delta_k}^x,x)\defeq(\XX,\delta_k^{-1}\dist,\mass/\mass(B_{\delta_k}(x)),x)\rightarrow (\mathbb{H}^n,\dist_{\mathbb{H}},\underline{\LL}^{2n+1},0)\qquad\text{in the $\rm pmGH$ topology}
$$ 
and that $f_k\defeq \delta_k^{-1}(f-f(x))$ converge locally uniformly to a linear function $L_x:\mathbb{H}^n\rightarrow\RR$,  where, for some $\zeta=(\zeta_1,\ldots,\zeta_{2n},0)$ with $(\zeta_1,\ldots,\zeta_{2n})\in \mathbb{S}^{2n-1}$, $$L_x(y)=\lip(f)(x)(y\,\cdot\,\zeta)\qquad\text{for every }y\in\mathbb{H}^n$$
 (see \cite[Theorem 4.2]{Gorny}). %{\color{red}Following the proof of Lemma \ref{blowup}, being $f$ $1$-%Lipschitz, we estimate:
%\begin{equation}\label{omkn2}
%\begin{split}
%		&\int_{B_{\delta_k R}(x)} \chi_{\{|f(y)-f(x)|>\delta_k\}}(x,y)\frac{\delta_k^p}{\mass(B_{\dist(x,y)}(x))\dist(x,y)^p}\dd\mass(y)
%	\\&\qquad =\int_{B_{\delta_k R}(x)\setminus B_{\delta_k }(x)} \chi_{\{|f(y)-f(x)|>\delta_k\}}(x,y)\frac{\delta_k^{2n+2}}{\dist(x,y)^{2n+2}\mass(B_{\delta_k}(x))}\frac{\delta_k^p}{\dist(x,y)^p}\dd\mass(y)+\omega^{\rm I}_{k,R},
%\end{split}
%\end{equation}
%where 
%$$
%\omega^{\rm I}_{k,R}\defeq \int_{B_{\delta_k R}(x)\setminus B_{\delta_k }(x)} \chi_{\{|f(y)-%f(x)|>\delta_k\}}(x,y)\bigg(\frac{1}{\mass(B_{\dist(x,y)}(x))}-\frac{\delta_k^{2n+2}}%%%{\dist(x,y)^{2n+2}\mass(B_{\delta_k}(x))}\bigg)\frac{\delta_k^p}{\dist(x,y)^p}\dd\mass(y)
%$$
%Then, recalling \eqref{lemma28eq}  and then  \eqref{density2},
%\begin{equation}\label{est1x}
%\begin{split}
%		|\omega^{\rm I}_{k,R}|&\le \int_{B_{\delta_k R}(x)\setminus B_{\delta_k }(x)} \bigg|1-\frac{\delta_k^{2n+2}\mass(B_{\dist(x,y)}(x))}{\dist(x,y)^{2n+2}\mass(B_{\delta_k}(x))}\bigg|\frac{\delta_k^p}{\mass(B_{\dist(x,y)}(x))\dist(x,y)^p}\dd\mass(y)
%	\\&\le \sup_{y\in B_{\delta_kR}(x)} \bigg|1-\frac{\delta_k^{2n+2}\mass(B_{\dist(x,y)}(x))}{\dist(x,y)^{2n+2}\mass(B_{\delta_k}(x))}\bigg|\int_{\XX\setminus B_{\delta_k }(x)}\frac{\delta_k^p}{\mass(B_{\dist(x,y)}(x))\dist(x,y)^p}\dd\mass(y)\rightarrow 0.
%\end{split}
%\end{equation}}
Following \eqref{omkn} and \eqref{est1} of the proof of Lemma \ref{blowup}  (here the assumption \eqref{density2} comes into play),
we reduce ourselves to compute the limit of  $$\int_{B_{\delta_k R}(x)\setminus B_{\delta_k }(x)} \chi_{\{|f(y)-f(x)|>\delta_k\}}(x,y)\frac{\delta_k^{2n+2}}{\dist(x,y)^{2n+2}\mass(B_{\delta_k}(x))}\frac{\delta_k^p}{\dist(x,y)^p}\dd\mass(y).$$
Hence, employing the same computations as in the proof of Lemma \ref{blowup}, we are left to compute
\[\frac{1}{\omega_n}\int_{\{y\in\mathbb{H}^n:\lip(f)(x)|y\,\cdot\,\zeta|\geq 1\}}\frac{1}{\|y\|^{Q+p}}\dd{\LL}^{2n+1}(y)\]
where $Q\defeq2n+2$. Now, if $\lip(f)(x)\ne 0$, we perform the change of variables $z=\delta_{\lip(f)(x)}y$ and we see that
\begin{align}
    \int_{\{y:\lip(f)(x)|y\,\cdot\, \zeta|\geq 1\}}\frac{1}{\|y\|^{Q+p}}\dd{\LL}^{2n+1}(y)&=\lip(f)(x)^{p}\int_{\{z:|z\,\cdot\, \zeta|\geq 1\}}\frac{1}{\|z\|^{Q+p}}\dd{\LL}^{2n+1}(z)\\
    &=\lip(f)(x)^{p}\int_{\mathbb{H}^n}\chi_{\{z:|z\,\cdot\, \zeta|\geq 1\}}\frac{1}{\|z\|^{Q+p}}\dd{\LL}^{2n+1}(z).
\end{align}
Set $z=\delta_h\sigma$ where $\sigma\in \Sigma\defeq\{u\in \mathbb{H}^n:\|u\|=1\}$ and $h\in [0,\infty)$. Then
\begin{align}
    \lip(f)(x)^{p}\int_{\mathbb{H}^n}\chi_{\{z:|z\,\cdot\, \zeta|\geq 1\}}\frac{1}{\|z\|^{Q+p}}\dd{\LL}^{2n+1}(z)&=\lip(f)(x)^{p}\int_{\Sigma}\int_{0}^{\infty}\chi_{\{h|v\,\cdot\, \zeta|\geq 1\}}\frac{h^{Q-1}}{h^{Q+p}}\dd h\dd \sigma(v)\\
    &=\lip(f)(x)^{p}\int_{\Sigma}\int_{0}^{\infty}\chi_{\{h|v\,\cdot\, \zeta|\geq 1\}}\frac{1}{h^{p+1}}\dd h\dd \sigma(v)\\
    &=\lip(f)(x)^{p}\int_{\Sigma}\int_{1/|v\,\cdot\, \zeta|}^{\infty}\frac{1}{h^{p+1}}\dd h\dd \sigma(v)\\
    &=\frac{\lip(f)(x)^{p}}{p}\int_{\Sigma}|v\,\cdot\, \zeta|^p\dd \sigma(v),
\end{align}
where $\dd \sigma$ is a suitable Radon measure on $\Sigma$, cfr. \cite[Proposition 1.15]{FolSte}. The proof is concluded.
\end{proof}

\medskip\noindent\textbf{Application of the blow-up argument}
\begin{prop}\label{mainprop}
	Let $(\XX,\dist,\mass)$ be a $\PI$ space satisfying either Assumption \ref{Ass1} or Assumption \ref{Ass2}.	
	Let $f\in\LIP_{\mathrm{bs}}(\XX)$. Then, for every $p\in(1,\infty)$,
\begin{equation}\label{mainpropeq}
		\lim_{\delta\searrow 0} I_\delta(f)=\frac{C_{n,p}} {p \omega_n}\int_\XX \lip(f)^p\dd\mass,
\end{equation}
	where $C_{n,p}$ is as in Definition \ref{defnormconst}.
\end{prop}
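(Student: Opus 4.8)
The plan is to upgrade the pointwise blow-up statement of Lemma~\ref{blowup} (in the case of Assumption~\ref{Ass1}) or Lemma~\ref{blowuph} (in the case of Assumption~\ref{Ass2}) to the integrated statement \eqref{mainpropeq} by integrating \eqref{blowupeq} (resp.\ \eqref{blowupeq2}) over $\XX$ against $\dd\mass(x)$ and passing the limit inside the integral via dominated convergence. Concretely, writing
\[
g_\delta(x)\defeq\int_\XX \chi_{\{|f(y)-f(x)|>\delta\}}(x,y)\frac{\delta^p}{\mass(B_{\dist(x,y)}(x))\dist(x,y)^p}\dd\mass(y),
\]
we have $I_\delta(f)=\int_\XX g_\delta(x)\dd\mass(x)$ by Fubini, and Lemma~\ref{blowup} (or~\ref{blowuph}) gives $g_\delta(x)\to \frac{C_{n,p}}{p\omega_n}\lip(f)^p(x)$ for $\mass$-a.e.\ $x$. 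Since $f\in\LIP_{\mathrm{bs}}(\XX)$, the limit function $\lip(f)^p$ is bounded with bounded support and hence in $L^1(\mass)$, so once we have a $\delta$-uniform integrable dominating function for $g_\delta$, dominated convergence yields $I_\delta(f)\to\frac{C_{n,p}}{p\omega_n}\int_\XX\lip(f)^p\dd\mass$.

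The heart of the argument, and the main obstacle, is producing this dominating function: I must bound $g_\delta(x)$ by an $L^1(\mass)$ function independent of $\delta\in(0,1)$. The strategy is to split the inner integral at radius $\dist(x,y)=1$ (or at any fixed radius independent of $\delta$; here the bounded support of $f$ makes $1$ convenient once we know $\lip(f)$ is bounded, say by $\Lambda$). On the far region $\XX\setminus B_1(x)$ we simply drop the indicator and estimate, using $\delta<1$ and Lemma~\ref{lemma28},
\[
\int_{\XX\setminus B_1(x)}\frac{\delta^p}{\mass(B_{\dist(x,y)}(x))\dist(x,y)^p}\dd\mass(y)\le\int_{\XX\setminus B_1(x)}\frac{1}{\mass(B_{\dist(x,y)}(x))\dist(x,y)^p}\dd\mass(y)\le C,
\]
with $C$ depending only on $(\XX,\dist,\mass)$; this contributes a bounded term, integrable since $g_\delta$ is supported (in $x$) in a bounded set — indeed if $x$ is far from $\operatorname{supp}f$ the indicator forces $y$ near $\operatorname{supp}f$, hence $\dist(x,y)$ large, and Lemma~\ref{lemma28} again gives smallness; more simply, $I_\delta(f)<\infty$ is already known from Theorem~\ref{mainthm} and this piece is harmless. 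On the near region $B_1(x)$, the Lipschitz bound gives $\chi_{\{|f(y)-f(x)|>\delta\}}(x,y)\le\chi_{\{\Lambda\dist(x,y)>\delta\}}(x,y)$, i.e.\ $\dist(x,y)>\delta/\Lambda$, so using the uniform doubling condition (this is precisely where full doubling, not merely local doubling, is invoked — compare Remark~\ref{verodoub}) to compare $\mass(B_{\dist(x,y)}(x))$ with $\mass(B_{\delta/\Lambda}(x))$ on dyadic annuli, one obtains
\[
g_\delta(x)\restriction_{B_1(x)}\le \delta^p\sum_{j\ge 0}\frac{C_D\, 2^{-jp}\Lambda^p}{\delta^p}\,\frac{\mass(B_{2^{j+1}\delta/\Lambda}(x))}{\mass(B_{2^{j+1}\delta/\Lambda}(x))}\le C\Lambda^p,
\]
again a constant independent of $\delta$ (the geometric series in $2^{-jp}$ converges since $p>1$).

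Combining, $g_\delta(x)\le C(1+\Lambda^p)\,\chi_{K}(x)$ for a fixed compact-support bound set $K\supseteq\operatorname{supp}f$ enlarged by a fixed amount, which is in $L^1(\mass)$ because $\mass$ is finite on balls; dominated convergence then gives \eqref{mainpropeq}. One technical point to handle carefully is the behaviour of $g_\delta(x)$ for $x$ outside a neighbourhood of $\operatorname{supp}f$: there the indicator $\chi_{\{|f(y)-f(x)|>\delta\}}$ can only be nonzero when $y$ ranges over $\operatorname{supp}f$, which is at distance $\ge d_0\defeq\dist(x,\operatorname{supp}f)$ from $x$, so $g_\delta(x)\le\delta^p\int_{\XX\setminus B_{d_0}(x)}\frac{\dd\mass(y)}{\mass(B_{\dist(x,y)}(x))\dist(x,y)^p}\le C\delta^p/d_0^p$, which tends to $0$ pointwise and is dominated by the integrable function $x\mapsto C/\dist(x,\operatorname{supp}f)^p$ on, say, $\{d_0\ge 1\}$ (integrability there again by Lemma~\ref{lemma28} applied around a fixed point of $\operatorname{supp}f$). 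With the dominating function in hand on all of $\XX$, the proof concludes.
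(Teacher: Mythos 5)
Your overall strategy---pointwise blow-up from Lemma \ref{blowup} (or \ref{blowuph}) plus a $\delta$-uniform bound on the inner integral $g_\delta$ and dominated convergence---is the same as the paper's, and your bound $g_\delta(x)\le C(1+\Lambda^p)$ valid for \emph{every} $x$ is correct (your dyadic-annuli computation just reproves Lemma \ref{lemma28} with $R=\delta/\Lambda$; compare \eqref{csdcs}, where the paper first rescales $f$ to be $1$-Lipschitz; also the geometric series converges for every $p>0$, not only $p>1$). The genuine gap is in how you make this bound integrable in $x$ over the unbounded part of $\XX$. Your assertion that $g_\delta$ is supported (in $x$) in a bounded set is false, the remark that $I_\delta(f)<\infty$ for each fixed $\delta$ gives no $\delta$-uniform majorant, and your final repair does not work either: you dominate $g_\delta(x)\le C\delta^p/\dist(x,\operatorname{supp}f)^p$ by the $\delta$-independent function $x\mapsto C\,\dist(x,\operatorname{supp}f)^{-p}$ and claim its integrability on $\{\dist(\,\cdot\,,\operatorname{supp}f)\ge 1\}$ via Lemma \ref{lemma28}, but the integrand in \eqref{lemma28eq} carries the crucial volume factor $1/\mass(B_{\dist(x,y)}(x))$, which your candidate majorant lacks. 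In fact $\int_{\{\dist(x,\operatorname{supp}f)\ge 1\}}\dist(x,\operatorname{supp}f)^{-p}\,\dd\mass(x)=\infty$ already for $\XX=\RR^n$ with the Lebesgue measure whenever $p\le n$, a space satisfying Assumption \ref{Ass1}; so no dominated convergence argument with that majorant can close the far-field region.

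The fix is short and is exactly what the paper does in \eqref{cndcadc}: fix $\hat B_1=B_{\bar r}(\bar x)\supseteq\operatorname{supp}f$ and $\hat B_2=B_{\bar r+1}(\bar x)$; for $x\notin\hat B_2$ the indicator forces $y\in\hat B_1$, hence $\dist(x,y)\ge 1$, and instead of dominating pointwise in $x$ one bounds the \emph{total} contribution of $\XX\setminus\hat B_2$ by swapping the order of integration (Fubini) and applying Lemma \ref{lemma28} in the $x$-variable (using doubling to compare $\mass(B_{\dist(x,y)}(x))$ with $\mass(B_{\dist(x,y)}(y))$), which yields the quantitative estimate $C\delta^p\mass(\hat B_1)\to 0$. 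Dominated convergence is then invoked only on the bounded set $\hat B_2$, where your uniform bound $g_\delta\le C$ together with $\mass(\hat B_2)<\infty$ suffices, as in \eqref{cndcadc1}. With that replacement of your far-region argument, the rest of your proposal goes through.
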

\begin{proof}
	We let $C>0$ denote a constant that depends only $(\XX,\dist,\mass)$ and may vary during the proof.
%	
%
%	Now notice that 
%	$$
%	C^{-1}\Ch_p(f)\le\liminf_{\delta\searrow 0} I_\delta(f)\le \limsup_{\delta\searrow 0} I_\delta(f) \le  C\Ch_p(f).
%	$$
%	Indeed, the first inequality follows from Lemma \ref{blowup} together with Fatou's Lemma, whereas the last inequality follows from \cite[Theorem 1.5]{DiMarSquas}. Hence, in what follows, we will assume that $\Ch_p(f)<\infty$, as otherwise there is nothing to show. 
%	Also, b
	By scaling, we can assume that $f$ is $1$-Lipschitz, so that, by \eqref{lemma28eq}, for every $x\in\XX$,
	\begin{equation}\label{csdcs}
\begin{split}
				&\int_\XX\chi_{\{|f(x)-f(y)|>\delta\}}(x,y)\frac{\delta^p}{\mass(B_{\dist(x,y)}(x))\dist(x,y)^p}\dd\mass(y)\\&\qquad\le \int_{\XX\setminus B_\delta(x)} \frac{\delta^p}{\mass(B_{\dist(x,y)}(x))\dist(x,y)^p}\dd\mass(y)\le C.
\end{split}
	\end{equation}
Fix $\bar x\in\XX$  and $\bar r>0$ such that the support of $f$ is contained in $\hat B_1\defeq B_{\bar r}(\bar x)$, let also $\hat B_2\defeq B_{\bar r+1}(\bar x)$. 

Now we compute, taking into account that if $x\notin \hat B_2$ and $y\notin \hat B_1$ then $f(x)=f(y)=0$, by Fubini,
\begin{equation}\label{cndcadc}
	\begin{split}
		&\int_{\XX\setminus \hat B_2}\int_\XX\chi_{\{|f(x)-f(y)|>\delta\}}(x,y)\frac{\delta^p}{\mass(B_{\dist(x,y)}(x))\dist(x,y)^p}\dd\mass(y)\dd\mass(x) \\&\qquad \le\int_{\XX\setminus \hat B_2}\int_{\hat B_1}\frac{\delta^p}{\mass(B_{\dist(x,y)}(x))\dist(x,y)^p}\dd\mass(y)\dd\mass(x)
		\\&\qquad=\delta^p\int_{\hat B_1}\int_{\XX\setminus \hat B_2} \frac{1}{\mass(B_{\dist(x,y)}(x))\dist(x,y)^p}\dd\mass(x)\dd\mass(y)
	\\&\qquad\le\delta^p\int_{\hat B_1}\int_{\XX\setminus B_1(y)} \frac{1}{\mass(B_{\dist(x,y)}(x))\dist(x,y)^p}\dd\mass(x)\dd\mass(y)\le \delta^p \mass(\hat B_1) C,
	\end{split}
\end{equation}
where the last inequality is due to \eqref{lemma28eq}.

Also, by \eqref{csdcs} we can use dominated convergence together with \eqref{blowupeq} of Lemma \ref{blowup}  or \eqref{blowupeq2} of Lemma \ref{blowuph}  to infer that
\begin{equation}\label{cndcadc1}
\begin{split}
			&\lim_{\delta\searrow 0}\int_{\hat B_2}\int_\XX\chi_{\{|f(x)-f(y)|>\delta\}}(x,y)\frac{\delta^p}{\mass(B_{\dist(x,y)}(x))\dist(x,y)^p}\dd\mass(y)\dd\mass(x)\\&\qquad=\int_{\hat B_2}\frac{C_{n,p}}{p \omega_n} \lip(f)^p(x)\dd\mass(x).
\end{split}
\end{equation}
Therefore, \eqref{mainpropeq} follows from \eqref{cndcadc} and \eqref{cndcadc1}.
\end{proof}

\begin{rem}
    It is worth noticing that Lemma  \ref{blowup} and Proposition \ref{mainprop} hold also in \say{local version}, i.e.\ integrating on an open set $\Omega\subseteq\XX$, with the same proof.  For example, if $(\XX,\dist,\mass)$ is a $\PI$ space satisfying Assumption \ref{Ass1}, $\Omega\subseteq\XX$ is open  and  $f\in\LIP_{\mathrm{bs}}(\XX)$, then, for every $p\in(1,\infty)$,
\begin{equation}\notag
		\lim_{\delta\searrow 0} \iint_{\{(x,y)\in\Omega\times\Omega:|f(x)-f(y)|>\delta\}}\frac{\delta^p}{\mass(B_{\dist(x,y)}(x))\dist(x,y)^p}\dd\mass(y)\dd\mass(x)=\frac{C_{n,p}} {p \omega_n}\int_\Omega \lip(f)^p\dd\mass,
\end{equation}
	where $C_{n,p}$ is as in Definition \ref{defnormconst}.
 \fr 
\end{rem}

\subsection{Proof of the main results}\label{sectproof}
We recall first the definition of the functionals $I_\delta$ and $J_\delta$ in Definition \ref{IandJ}. 
\begin{proof}[Proof of Theorem \ref{mainthm1}]
	 Assume first $f\in W^{1,p}(\XX)$. Then by Theorem \ref{mainthm} it holds that $$\sup_{\delta\in(0,1)}I_\delta(f)<\infty.$$
	We can then take the supremum for $\epsilon\in(0,1)$ in \eqref{IandJeq} of and prove the claim. 

	Conversely, assume that 
	$$
			\sup_{\delta\in(0,1)}\iint_{|f(x)-f(y)|\le 1}\frac{\delta|f(x)-f(y)|^{p+\delta}}{\mass(B_{\dist(x,y)}(x))\dist(x,y)^p}\dd\mass(y)\dd\mass(x)<\infty$$
			and that
				$$
			\iint_{|f(x)-f(y)|>  1}\frac{1}{\mass(B_{\dist(x,y)}(x))\dist(x,y)^p}\dd\mass(y)\dd\mass(x)<\infty.
			$$
			Therefore,
				$$\sup_{\delta\in(0,1)}\iint_{\XX\times\XX}\frac{\delta(|f(x)-f(y)|\wedge 1)^{p+\delta}}{\mass(B_{\dist(x,y)}(x))\dist(x,y)^p}\dd\mass(y)\dd\mass(x)<\infty,$$
						so that we can follow  \cite[proof of the lower bound of Theorem 1.5]{DiMarSquas} and conclude that $f\in W^{1,p}(\XX)$.
\end{proof}
\begin{proof}[Proof of Theorem \ref{thmlimits}] With 
Proposition \ref{mainprop} and Theorem \ref{cart} in mind, the proof is not so different from the one of \cite[Proposition 3.3]{NguyenPinamontiSquassinaVecchi}.
By \cite{ACM14} (and an immediate approximation argument), we have a sequence $(f_l)_l\subseteq\LIP_{\mathrm{bs}}(\XX)$ with $f_l\rightarrow f$ in $W^{1,p}(\XX)$.
 We use \eqref{trick} of Lemma \ref{tricklem} to infer that for every $l$, for every $\delta,\epsilon\in (0,1)$,
\begin{equation}\label{csdcas}
	I_\delta(f)\le  \frac{1}{\epsilon^p}I_{\epsilon\delta}(f-f_l)+\frac{1}{(1-\epsilon)^p}I_{(1-\epsilon)\delta}(f_l).
\end{equation}
Therefore, using \eqref{mainpropeq} of Proposition \ref{mainprop} and Theorem \ref{cart} (there the definition of $C_U$)
\begin{equation}
	\limsup_{\delta\searrow 0} I_{\delta}(f)\le\frac{C_U}{\epsilon^p} \Ch_p(f-f_l)+\frac{1}{(1-\epsilon)^p} \frac{C_{n,p}} {p \omega_n} \Ch_p(f_l),
\end{equation}
as the $\limsup$ is subadditive.
We now let $l\rightarrow\infty$ to infer that
\begin{equation}
	\limsup_{\delta\searrow 0} I_\delta(f)\le\frac{1}{(1-\epsilon)^p} \frac{C_{n,p}} {p \omega_n} \Ch_p(f)
\end{equation}
and then we let $\epsilon\searrow 0$ to infer
\begin{equation}\label{conc1}
	\limsup_{\delta\searrow 0} I_\delta(f)\le \frac{C_{n,p}} {p \omega_n} \Ch_p(f).
\end{equation}

Exchanging the roles of $f$ and $f_l$ in \eqref{csdcas},
\begin{equation}
	I_\delta(f_l)- \frac{1}{\epsilon^p}I_{\epsilon\delta}(f-f_l)\le\frac{1}{(1-\epsilon)^p}I_{(1-\epsilon)\delta}(f),
\end{equation}
so that, again by \eqref{mainpropeq} of Proposition  \ref{mainprop}  and Theorem \ref{cart} (there the definition of $C_U$),
\begin{equation}
	\frac{C_{n,p}} {p \omega_n} \Ch_p(f_l)-\frac{C_U}{\epsilon^p} \Ch_p(f-f_l)\le\frac{1}{(1-\epsilon)^p}\liminf_{\delta\searrow 0}I_{\delta}(f),
\end{equation}
as the $\liminf$ is superadditive. Again, we let $l\rightarrow\infty$ 
to infer that
\begin{equation}
	\frac{C_{n,p}} {p \omega_n} \Ch_p(f)\le \frac{1}{(1-\epsilon)^p}\liminf_{\delta\searrow 0}I_{\delta}(f)
\end{equation}
and then we let  $\epsilon\searrow 0$ to infer that 
\begin{equation}\label{conc2}
\frac{C_{n,p}} {p \omega_n} \Ch_p(f)\le \liminf_{\delta\searrow 0}I_{\delta}(f).
\end{equation}

Therefore, \eqref{mainthmeq} follows from \eqref{conc1} and \eqref{conc2}. Also, we can use \eqref{mainthmeq} to let $\epsilon\searrow 0$ in  \eqref{IandJeq} and deduce  \eqref{mainthmeq1}.
\end{proof}

\textbf{Acknowledgements:} We warmly thank Gioacchino Antonelli for some useful discussions on the content of \cite{Ant23} and Hoai-Minh Nguyen for helpful  comments about an earlier version of this manuscript. Part of this work was developed while the first named author was visiting the Università di Trento; the first named author wishes to thank the instutute for its kind hospitality.

\bibliographystyle{alpha}
\bibliography{biblio} 

\end{document}